\documentclass[journal,web]{ieeecolor}
\usepackage{generic}
\usepackage{cite}
\usepackage{graphicx}
\usepackage{textcomp}
\usepackage{dsfont}
\usepackage{enumerate}
\usepackage{lscape}
\usepackage{booktabs}

\usepackage{times} % assumes new font selection scheme installed
\usepackage{amsmath} % assumes amsmath package installed
\usepackage{amssymb}  % assumes amsmath package installed
\usepackage{amsfonts}  % assumes amsmath package installed

\usepackage{amsthm}

\newtheoremstyle{mytheorem}
  {3pt}              % Space above
  {3pt}              % Space below
  {\itshape}      % Body font
  {}                 % Indent
  {\bfseries}        % Theorem heading font
  {:}                % Punctuation after theorem number
  {.5em}             % Space after heading
  {}                 % Heading specification

\theoremstyle{mytheorem}
\newtheorem{theorem}{Theorem}
\newtheorem{proposition}{Proposition}
\newtheorem{lemma}{Lemma}

\newtheorem{remark}{Remark}

\DeclareMathOperator{\Span}{span}
\DeclareMathOperator{\Grad}{grad}
\DeclareMathOperator{\Hess}{Hess}
\DeclareMathOperator{\Div}{div}

\newcommand{\volg}{\mathrm{vol}}
\newcommand{\M}{\mathcal{M}}
\newcommand{\Pcal}{\mathcal{P}(\M)}
\newcommand{\Fcal}{\mathcal{F}}
\newcommand{\Wcal}{\mathcal{W}}
\newcommand{\dTV}[2]{\left\|#1-#2\right\|_{\mathrm{TV}}}
\newcommand{\dist}{d}

\begin{document}
\title{Stochastic Lie-Bracket Approximations for Zeroth-Order Optimization on Manifolds}
\author{Mahmoud Abdelgalil, Miroslav Krstic, and Jorge I. Poveda
\thanks{This work was supported in part by the Defense Advanced Research Projects Agency (DARPA) under Grant No. HR00112530225.}
\thanks{
M. Abdelgalil is with the Department of Mechanical and Aerospace Engineering, University at Buffalo, State University of New York, Amherst, NY, 14260, USA (email:  maabdelg@buffalo.edu). Miroslav Krstic is with the Department of Mechanical and Aerospace Engineering, University of California San Diego, La Jolla, CA 92093, USA (email: mkrstic@ucsd.edu). J. I. Poveda is with the Department of Electrical and Computer Engineering,
        University of California San Diego, La Jolla, CA 92093, USA (email: jipoveda@ucsd.edu).
}
}

\maketitle

\pagestyle{empty}
\thispagestyle{empty}
\begin{abstract}
We propose a continuous-time algorithm for zeroth-order global optimization of smooth functions on compact manifolds that
synthesizes Riemannian gradient dynamics using only objective measurements, without evaluating gradients. The algorithm is
formulated as an ordinary differential equation driven by piecewise-smooth McShane-type approximations of the Wiener
process, with a geometric compensation term that depends only on the actuation frame and not on the objective function.
Our main result establishes infinite-horizon practical probabilistic guarantees for fixed approximation parameters.
To this end, we show that the algorithm's trajectories approximate, in the mean-square sense, a Riemannian Langevin
diffusion whose unique invariant Gibbs measure concentrates on the set of global minimizers. We exploit this approximation
to establish finite-horizon probabilistic convergence guarantees and directly analyze the algorithm's long-time behavior to obtain the infinite-horizon result.
\end{abstract}

\begin{IEEEkeywords}
Stochastic optimization, Zeroth-Order Optimization, Optimization on Manifolds.
\end{IEEEkeywords}

\section{INTRODUCTION}
\label{sec:introduction}

 \IEEEPARstart{M}{odel-free} optimization algorithms that rely only on real-time measurements of an objective function play an important role in adaptive control, LLM fine-tuning, and learning-based decision-making systems. A key challenge is establishing global convergence guarantees without restrictive convexity-like assumptions, particularly in the presence of complex constraints induced by smooth manifolds. In such settings, it is well-known that spurious equilibria may arise, precluding global convergence guarantees for gradient-based algorithms \cite{ochoa2025robust}. A standard approach to overcoming spurious equilibria in nonconvex Euclidean optimization is to inject persistent stochasticity, transforming the gradient flows into a Langevin diffusion whose invariant Gibbs measure concentrates near global minimizers. This principle underlies discrete-time \cite{gelfand1991recursive} and continuous-time simulated annealing \cite{chiang1987diffusion,gelfand1991recursive,holley1989asymptotics} and stochastic approximation-based algorithms \cite{borkar2008stochastic,crisafulli2026two,maass2022tracking,li2024zeroth}, and has also motivated Langevin-based methods on Riemannian manifolds \cite{girolami2011riemann}. However, first-order methods require gradient information, and existing stochastic zeroth-order methods are predominantly discrete-time, with
constraints typically enforced through projections at each iteration. Consequently, continuous-time geometric frameworks for global
zeroth-order optimization on smooth manifolds remain largely unexplored.
 
On the other hand, a prominent dynamical-systems approach to zeroth-order optimization is
extremum seeking (ES) control \cite{krstic2000stability}, which combines
persistent probing and averaging to establish control-theoretic stability
guarantees. Lie-bracket-based formulations of ES \cite{durr2013lie}
have enabled systematic designs with bounded update rates
\cite{scheinker2014extremum}, local asymptotic stability
\cite{grushkovskaya2018class}, and global practical stability
\cite{abdelgalil2024initialization}, with extensions to manifolds
\cite{durr2014extremum,grushkovskaya2018class,suttner2022extremum,
abdelgalil2023singularly} and hybrid systems
\cite{abdelgalil2025event,abdelgalil2025lie,abdelgalil2026hybrid}.
However, smooth deterministic ES on manifolds inherits a fundamental
limitation of the gradient flows it approximates: convergence is
generally local or, at best, almost global. Although hybrid ES based
on synergistic switching can overcome this limitation
\cite{abdelgalil2025lie,ochoa2025robust}, it typically requires
additional knowledge of the cost function to escape undesirable
equilibria.

Motivated by this background, in this paper we introduce a new class of continuous-time zeroth-order optimization algorithms based on \emph{stochastic Lie-bracket approximations} (SLBA) that provide global convergence guarantees using only objective measurements, without requiring additional knowledge of the cost function. The proposed dynamics combine the
bounded-update-rate design of \cite{scheinker2014extremum} with Brownian, rather than periodic, dithering. The algorithm is modeled by a \emph{random ODE} \cite{han2017random} on a smooth
compact manifold $\M$, driven by piecewise-smooth McShane-type
approximations of a Wiener process \cite{mcshane1975stochastic}. We show that, in the Wong--Zakai limit
\cite{wong1965convergence,ikeda1981stochastic,sussmann1991limits,
friz2010multidimensional}, its trajectories converge to those of a
Stratonovich SDE whose generator, under a suitable drift compensation,
coincides with that of a Riemannian Langevin diffusion, despite the
algorithm using only zeroth-order information. We also show that the resulting Gibbs
invariant measure concentrates on the global minimizers of the objective
function, and we quantify this concentration and establish finite-horizon practical guarantees through mixing properties of the limiting SDE.

We further study the algorithm itself for a fixed discretization parameter
$\varepsilon>0$. In this scenario, its sampled dynamics define a Markov chain for which we
establish the existence of invariant measures and characterize their
long-time behavior, showing weak convergence toward the Gibbs measure as
$\varepsilon\to0$. This yields global practical probabilistic optimization
guarantees over infinite horizons and from arbitrary initial conditions.
Compared with deterministic Lie-bracket ES on manifolds
\cite{durr2014extremum}, Brownian dithering replaces local practical
convergence with global measure-theoretic guarantees. Relative to stochastic ES \cite{liu2012stochastic,suttner2026averaging}, the main novelties are the incorporation of the manifold setting via Lie brackets
and, importantly, the invariant-measure analysis of the approximating
dynamics at fixed $\varepsilon$. In particular, our result is the first in the literature on stochastic ES to show infinite-time probabilistic guarantees for fixed $\varepsilon>0$. This Markov-chain perspective is related to the analysis of numerical SDE schemes
\cite{mattingly2002ergodicity,meyn2012markov}, but our algorithm is
instead a continuous-time random ODE.

\vspace{-0.2cm}
\section{NOTATION AND PRELIMINARIES}\label{sec:notation}
We denote the natural numbers by $\mathbb N_0:=\{0,1,2,\dots\}$ and $\lceil a\rceil$ is the least integer greater than or equal to $a$. For a
subset $A$ of a metric space, $\overline A$ and $\mathrm{int}(A)$ denote closure and interior, $\mathbf 1_A$ the
indicator function, and, for $\eta>0$ let $A^\eta:=\{y:\ \dist(y,A)<\eta\}$ be the open $\eta$-neighbourhood of $A$. The Lebesgue measure on $\mathbb R^N$ is written $\mathrm{Leb}$, and
$B_\rho(a)\subset\mathbb R^N$ is the open Euclidean ball of radius $\rho$ centred at $a$. The Euclidean norm is denoted by $|\cdot|$ and $\|\cdot\|$ the induced operator norm. For a linear map $L$, $\sigma_{\min}(L)$ is its smallest
singular value. We use $\M$ to denote a smooth, connected, compact, orientable $n$-dimensional manifold without boundary, embedded in $\mathbb R^m$, $T_x\M$ and $T^*_x\M$ are its tangent and cotangent spaces at $x$, respectively, and
$\mathcal B(\M)$ denotes its Borel $\sigma$-algebra. We use $C^\infty(\M)$, $C(\M)$, and $\mathfrak X(\M)$ for smooth
functions, continuous functions, and smooth vector fields on $\M$, respectively, and $\Span_{\mathbb R}$ for the linear span with constant real coefficients.
For $f,h\in\mathfrak X(\M)$ and $\psi\in C^\infty(\M)$, $f(\psi)$ denotes the derivative of $\psi$ along $f$,
$[f,h]$ the Lie bracket, $f\otimes h$ is the tensor product, and $\Phi^f_s$ is the time-$s$ flow of $f$. Differentials of
maps are written $\mathrm D$, and are regarded as maps into the ambient $\mathbb R^m$ whenever differentials at
distinct base points are compared. Given a Riemannian metric $\langle\cdot,\cdot\rangle_g$ on $\M$, the symbols $\Grad$, $\Div$, $\Delta$, $\Hess$, $\nabla$, and $\volg$ denote the gradient, divergence,
Laplace--Beltrami operator, Hessian, Levi-Civita connection, and Riemannian density, respectively, and
$\mathrm{Vol}(\M):=\int_\M\volg$. We write $\dist(\cdot,\cdot)$ for the geodesic distance and $B(z,\rho)\subset\M$ for the open
geodesic ball. Since $\M$ is compact, $\dist$ and the ambient distance $|x-y|$ are equivalent, modulo constants. We use $\|\psi\|_\infty:=\sup_\M|\psi|$ to denote the supremum norm, and $\mathrm{Lip}(\psi)$ to denote the smallest constant $L$ with $
|\psi(x)-\psi(y)|\le L\,\dist(x,y)
$. For a positive measure $\nu$ on $\M$, we equip the space $L^p(\nu)$ with the norm
$\|\psi\|_{L^p(\nu)}:=(\int_\M|\psi|^p\,\mathrm d\nu)^{1/p}$. We use $\Pcal$ to denote the Borel probability measures on $\M$ with the topology of weak convergence, and $\rightharpoonup$ to denote weak convergence. For $\nu,\nu'\in\Pcal$, we use $\dTV{\nu}{\nu'}$ to denote the total variation distance
and
\begin{equation}\label{dbldef}
d_{\mathrm{BL}}(\nu,\nu'):=\sup\Big\{\textstyle\int_\M f\,\mathrm d(\nu-\nu'): \|f\|_\infty\le1,
\mathrm{Lip}(f)\le1\Big\},
\end{equation}
to denote the bounded-Lipschitz distance, which metrizes weak convergence on the compact space $\M$. We use $\mathcal N(0,\Sigma)$ to denote the centered Gaussian distribution with covariance $\Sigma$. %For a Markov transition kernel $P:\M\times\M\rightarrow[0,1]$ on $\M$, we write
%$P\psi(x):=\int_\M\psi(y)\,P(x,\mathrm dy)$, and $P^k$ for the $k$-step kernel. 
Finally, we use $\mathbb E$ and $\mathbb P$ to denote expectation and probability.

\section{PROBLEM FORMULATION AND THE SLBA ALGORITHM}
\label{sec:formulation}

Let $\phi\in C^\infty(\M)$ be the objective function, accessible only through evaluations $\phi(x)$. Our goal is to solve the problem
\begin{align}\label{eq:problem}
    \min_{x\in\M}~\phi(x).
\end{align}
Let $\phi_\star:=\min_{x\in\mathcal M}\phi(x)$ and define the set of minimizers
$$
S_\star:=\{x\in\mathcal M:\ \phi(x)=\phi_\star\}.
$$ 
To solve problem \eqref{eq:problem}, we will design an algorithm expressed as a \emph{random ODE} \cite{han2017random}, whose trajectories converge to the set $S_\star$. To that end, let $\Fcal=\{f_1,\dots,f_r\}\subset\mathfrak{X}(\M)$ be a family of vector fields satisfying the span condition
\begin{equation}\label{eq:span}
\Span_{\mathbb{R}}\{f_1(x),\dots,f_r(x)\}=T_x\M,\qquad \forall x\in\M.
\end{equation}
Such a family always exists, e.g., the tangential projections of the ambient coordinate vector fields of the embedding $\M\subset\mathbb{R}^m$ provide $r=m$ fields satisfying \eqref{eq:span}. Let $\Wcal:=\{W\in C([0,\infty);\mathbb{R}^{2r}): W_0=0\}$ be the standard Wiener space, i.e., the space of continuous functions equipped with the Wiener measure, and, for any $W\in \Wcal$, let 
$$
w_k^\ell:=W^\ell_{(k+1)\varepsilon}-W^\ell_{k\varepsilon}
$$ 
denote the increments over the grid of step $\varepsilon>0$, with $k\in\mathbb{N}_0$, and $\ell\in\{1,\dots,2r\}$. For $\kappa>0$, define the functions 
\begin{align}
U^1_\kappa(s)&:=s+\frac{1+\sqrt{2\pi^2\kappa+1}}{2\pi}\sin(2\pi s),\label{eq:U1}\\
U^2_\kappa(s)&:=s+\frac{1+\sqrt{2\pi^2\kappa+1}}{\pi}\sin^2(\pi s),\label{eq:U2}
\end{align}
and, for $t\in[k\varepsilon,(k+1)\varepsilon)$, $i\in\{1,\dots,r\}$, $j\in\{1,2\}$, the piecewise-smooth process
\begin{equation}\label{eq:B}
B^{2i+j-2}_\varepsilon(t,W):=W^{2i+j-2}_{k\varepsilon}+{w}_k^{2i+j-2}\,U^{\iota_i^j({w}_k)}_\kappa\!\Big(\tfrac{t-k\varepsilon}{\varepsilon}\Big),
\end{equation}
where the random index $\iota_i^j({w})$ is defined by
\begin{align*}
    \iota_i^j({w}):=
    \begin{cases}
        j & {w}^{2i-1}{w}^{2i}\ge0 \\
        3-j & {w}^{2i-1}{w}^{2i}<0
    \end{cases}
\end{align*}
i.e., the two functions are swapped within the $i$-th pair when the corresponding increments have opposite signs. This piecewise smooth interpolation of Brownian motion is known as a McShane-type interpolation \cite{mcshane1975stochastic}. By construction, $B^\ell_\varepsilon$ is piecewise differentiable, and it has a well-defined piece-wise-continuous derivative $b^\ell_\varepsilon:=\dot B^\ell_\varepsilon$. The Stochastic Lie-Bracket Approximation (SLBA) algorithm is then defined by the following random ODE:
\begin{equation}\label{eq:rode}
\dot x = \bar f_0(x)+\frac{1}{\sqrt\kappa}\sum_{i=1}^r\sum_{j=1}^2 F_j(\phi(x))\,f_i(x)\, b^{2i+j-2}_\varepsilon(t,W),
\end{equation}
with $F_1=\cos$, $F_2=-\sin$, and a compensation drift $\bar f_0\in\mathfrak{X}(\M)$, specified later by Theorem \ref{thm:generator} in terms of the family of vector fields $\mathcal{F}$ only, and independently from the function $\phi$. Note that \eqref{eq:rode} is an ODE for each sample path $W$, and the objective enters only through the measured values $\phi(x(t))$. For each initial condition $x_0\in\mathcal{M}$ and each sample path $W$, we write $x^\varepsilon_t(x_0)$ to denote the solution of \eqref{eq:rode}. By continuity, solutions exist globally, and remain on $\M$ since the right-hand side is tangent to $\M$.

By the Wong--Zakai theory for approximations with corrections \cite{mcshane1975stochastic}, \cite[Ch.~7]{ikeda1981stochastic}, the trajectories of \eqref{eq:rode} approximate, as $\varepsilon\to0$, the sample paths of the following Stratonovich SDE:
\begin{multline}\label{eq:sde}
\mathrm{d}\bar x_t=\Big(\bar f_0(\bar x)+\sum_{i=1}^r\big[F_1(\phi)f_i,\,F_2(\phi)f_i\big](\bar x)\Big)\mathrm{d}t\\
+\tfrac{1}{\sqrt\kappa}\sum_{i=1}^r\sum_{j=1}^2F_j(\phi(\bar x))f_i(\bar x)\bullet\mathrm{d}W^{2i+j-2}_t,
\end{multline}
where the bracket term is the drift correction generated by \eqref{eq:U1}--\eqref{eq:B}. We write $\bar x_t(x_0)$ for the solution of \eqref{eq:sde} and $\bar P_t$ for its Markov semigroup. The following theorem, adapted from the results of \cite[Thm.~7.2 and Rem.~7.2]{ikeda1981stochastic}, establishes a \emph{stochastic convergence of trajectories} property that will be instrumental for our result:

\begin{theorem}\label{thm:scot}
For every $\delta,T>0$, there exists $\varepsilon^*(\delta,T)>0$ such that for all $\varepsilon\in(0,\varepsilon^*)$,
\begin{equation}\label{eq:wz}
\sup_{x_0\in\M}\ \mathbb{E}\Big[\sup_{t\in[0,T]}\big\|x^\varepsilon_t(x_0)-\bar x_t(x_0)\big\|^2\Big]<\delta,
\end{equation}
when both solutions are driven by the same sample path. \hfill $\square$
\end{theorem}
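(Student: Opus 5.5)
The plan is to reduce the statement to the Wong--Zakai approximation theorem of \cite[Thm.~7.2 and Rem.~7.2]{ikeda1981stochastic}. It applies to a general class of piecewise-smooth approximations $B_\varepsilon$ of $W$ and identifies the limiting equation as a Stratonovich SDE with an extra drift built from the Lie brackets of the diffusion fields, weighted by the limiting antisymmetric area $s^{ij}=\lim_{\varepsilon\to0}\frac{1}{2\varepsilon}\mathbb E\int_{k\varepsilon}^{(k+1)\varepsilon}(B^i_\varepsilon-W^i_{k\varepsilon})\,\mathrm dB^j_\varepsilon-(i\leftrightarrow j)$.

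\textbf{Step 1: move to Euclidean space.} Since $\M\subset\mathbb R^m$ is compact, I extend all vector fields $f_i$, $\bar f_0$ and the functions $F_j\circ\phi$ to smooth, compactly supported fields on $\mathbb R^m$ (bounded, with bounded derivatives of all orders). Both \eqref{eq:rode} and \eqref{eq:sde} leave $\M$ invariant, so their solutions starting in $\M$ are unchanged by the extension. This places us in the globally Lipschitz $C_b^\infty$ setting of \cite{ikeda1981stochastic}.

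\textbf{Step 2: check the hypotheses of Ikeda--Watanabe.} The interpolation in \eqref{eq:B} is of the form $B_\varepsilon(t)=W_{k\varepsilon}+\Xi(w_k,(t-k\varepsilon)/\varepsilon)$ with $\Xi(w,0)=0$ and $\Xi(w,1)=w$. It is adapted to the grid filtration, has scaling-invariant increment statistics, and has moment bounds $\mathbb E|\dot\Xi|^p\lesssim\varepsilon^{-p/2}$ uniformly, since $\iota$ only permutes two bounded profiles. I then compute the area term. Within pair $i$, the area over one step is $\varepsilon\,w^{2i-1}w^{2i}\int_0^1(U^a\dot U^b-U^b\dot U^a)\,\mathrm ds/\varepsilon$. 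The sign-dependent swap makes its expectation equal to $\mathbb E|w^{2i-1}w^{2i}|\cdot A$, where $A$ is the signed area of the curve $(U^1_\kappa,U^2_\kappa)$. Writing $c=(1+\sqrt{2\pi^2\kappa+1})/(2\pi)$, a direct computation gives $A$ quadratic in $c$. Together with $\mathbb E|w^{2i-1}w^{2i}|=2\varepsilon/\pi$, the constant in \eqref{eq:U1}--\eqref{eq:U2} is tuned so that $s^{2i-1,2i}\cdot\frac1\kappa=1$. This yields exactly the bracket drift $[F_1(\phi)f_i,F_2(\phi)f_i]$ in \eqref{eq:sde}. Areas across different pairs vanish by independence and symmetry.

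\textbf{Step 3: invoke the theorem and make it uniform.} Thm.~7.2 of \cite{ikeda1981stochastic} gives $\mathbb E\sup_{[0,T]}|x^\varepsilon_t-\bar x_t|^2\to0$ for each fixed initial condition. Its proof is a Gronwall argument, and its error bound depends only on $T$ and the $C^3_b$ norms of the coefficients, not on $x_0$. I will make this explicit by tracking constants, or alternatively by applying the theorem with a random initial condition. Uniformity in $x_0\in\M$ then follows, and choosing $\varepsilon^*(\delta,T)$ accordingly finishes the proof.

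The main obstacle is Step 2. The interpolation is not symmetric: the profile depends on the signs of the increments, so the standard McShane hypotheses (polynomial in the increments) must be verified for this sign-dependent $\Xi$, and the exact area constant must be checked to give coefficient one. I would first try to verify the conditions of Rem.~7.2 directly, namely the moment bounds and the conditional expectations of the first and second iterated integrals over each cell, since these are what the proof actually uses.
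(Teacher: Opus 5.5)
Your proposal is correct and takes the paper's route: the paper gives no argument beyond citing Ikeda--Watanabe Thm.~7.2 and Rem.~7.2, and your Steps 1--3 supply the hypothesis checks that this citation leaves implicit. Your area computation is right, since with your $c$ one gets $\int_0^1(U^1_\kappa\dot U^2_\kappa-U^2_\kappa\dot U^1_\kappa)\,\mathrm ds=2c(\pi c-1)=\pi\kappa$, so $s^{2i-1,2i}=\kappa$ and the correction is exactly $\sum_i[F_1(\phi)f_i,F_2(\phi)f_i]$. Note also that the sign-dependent swap is McShane's classical example in that framework, so your anticipated ``main obstacle'' is already covered, and that for uniformity in $x_0$ you need the constant-tracking route, since a random initial condition only controls an average over $x_0$, not the supremum.
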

In what remains of this manuscript, we utilize the stochastic convergence of trajectories property established by Theorem \ref{thm:scot}, in conjunction with the properties of the limiting SDE \eqref{eq:sde}, to analyze the behavior of the random ODE \eqref{eq:rode}.

\section{THE LIMITING DIFFUSION AND ITS GIBBS MEASURE}
\label{sec:sde}

We begin by analyzing the limiting SDE \eqref{eq:sde}. The span condition \eqref{eq:span} induces a canonical metric on $\M$. Indeed, since the co-metric $\sum_{i=1}^r f_i\otimes f_i$ is positive definite by virtue of \eqref{eq:span}, there exists a unique Riemannian metric $\langle\cdot,\cdot\rangle$ on $\M$ such that, $\forall\psi\in C^\infty(\M)$, 
\begin{equation}\label{eq:metric}
\Grad\psi=\sum\nolimits_{i=1}^rf_i(\psi)\,f_i.
\end{equation}
In the sequel, all geometric operators refer to this metric.

\begin{theorem}\label{thm:generator}
Let $\langle\cdot,\cdot\rangle$ be the metric \eqref{eq:metric} and define
$$\bar f_0=-\tfrac{1}{2\kappa}\sum_{i=1}^r\nabla_{f_i}f_i.$$
Then, the following holds:
\begin{enumerate}[(a)]
\item The infinitesimal generator of the SDE \eqref{eq:sde} is given by
\begin{equation}\label{eq:generator}
\mathcal{L}\psi=\tfrac{1}{2\kappa}\Delta \psi-\langle\Grad\phi,\Grad\psi\rangle_g,
\end{equation}
for all $\psi\in C^\infty(\M)$. 
\item The SDE \eqref{eq:sde} admits the unique invariant measure
\begin{equation}\label{eq:gibbs}
\mu_\kappa=c_\kappa^{-1}e^{-2\kappa\phi}\,\volg,
\end{equation}
where $c_\kappa:=\int_\M e^{-2\kappa\phi}\volg$. \hfill $\square$
\end{enumerate}
\end{theorem}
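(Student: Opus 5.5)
The plan is to compute the generator directly from the Stratonovich form of \eqref{eq:sde}, and then verify invariance and uniqueness of $\mu_\kappa$ by integration by parts together with elliptic regularity. Write $g_{ij}:=\tfrac{1}{\sqrt\kappa}F_j(\phi)f_i$ for the diffusion fields and $X_0:=\bar f_0+\sum_i[F_1(\phi)f_i,F_2(\phi)f_i]$ for the drift. For a Stratonovich SDE the generator is the Hörmander form
$$\mathcal L\psi=X_0\psi+\tfrac12\sum_{i,j}g_{ij}(g_{ij}\psi).$$

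For part (a) I will expand each term, using only $F_1=\cos$, $F_2=-\sin$.

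\emph{Second-order term.} We have $g_{ij}(g_{ij}\psi)=\tfrac1\kappa\big(F_j(\phi)^2 f_if_i\psi+F_jF_j'(\phi)\,f_i(\phi)\,f_i\psi\big)$. Summing over $j$ gives $F_1^2+F_2^2=1$ and $F_1F_1'+F_2F_2'=-\cos\sin+\sin\cos=0$. Hence $\tfrac12\sum g_{ij}^2\psi=\tfrac1{2\kappa}\sum_i f_if_i\psi$.

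\emph{Lie bracket term.} Since both fields are multiples of $f_i$, $[F_1(\phi)f_i,F_2(\phi)f_i]=(F_1F_2'-F_2F_1')(\phi)\,f_i(\phi)f_i=-f_i(\phi)f_i$. By \eqref{eq:metric} the sum over $i$ is $-\Grad\phi$. Therefore this term contributes $-\langle\Grad\phi,\Grad\psi\rangle$.

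\emph{Compensation term.} Since the co-metric is $\sum_i f_i\otimes f_i$, the Laplace--Beltrami operator is the trace of the Hessian against it:
$$\Delta\psi=\sum_i\Hess\psi(f_i,f_i)=\sum_i\big(f_if_i\psi-(\nabla_{f_i}f_i)\psi\big).$$
Adding $\bar f_0\psi=-\tfrac1{2\kappa}\sum_i(\nabla_{f_i}f_i)\psi$ to the second-order term gives exactly $\tfrac1{2\kappa}\Delta\psi$, which proves \eqref{eq:generator}. The only point needing care is the identity $\operatorname{tr}_g\Hess\psi=\sum_i\Hess\psi(f_i,f_i)$. It holds because $\sum_i f_i\otimes f_i$ is the inverse of $g$ by the definition of the metric, even though the $f_i$ are not a frame.

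For part (b), invariance amounts to showing $\int_\M\mathcal L\psi\,\rho\,\volg=0$ for all $\psi$, where $\rho=e^{-2\kappa\phi}$. Using Green's identity on the closed manifold, $\int\tfrac1{2\kappa}\Delta\psi\,\rho=-\tfrac1{2\kappa}\int\langle\Grad\psi,\Grad\rho\rangle=\int\langle\Grad\psi,\Grad\phi\rangle\rho$. This cancels the drift term, so $\mu_\kappa$ is invariant. Moreover, $\mathcal L$ is symmetric in $L^2(\mu_\kappa)$ with Dirichlet form $\tfrac1{2\kappa}\int|\Grad\psi|^2\,\mathrm d\mu_\kappa$.

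The main obstacle is uniqueness. Let $\nu$ be any invariant probability measure. Then $\int\mathcal L\psi\,\mathrm d\nu=0$ for all smooth $\psi$, i.e. $\mathcal L^*\nu=0$ in the sense of distributions. Since $\mathcal L$ is uniformly elliptic with smooth coefficients, and the span condition \eqref{eq:span} gives nondegeneracy, elliptic regularity (hypoellipticity of $\mathcal L^*$) shows that $\nu=h\rho\,\volg$ with $h\in C^\infty(\M)$. A direct computation gives $\mathcal L^*(h\rho)=\tfrac1{2\kappa}\Div(\rho\Grad h)$. Multiplying by $h$ and integrating yields $\int\rho|\Grad h|^2\volg=0$. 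So $h$ is constant by connectedness of $\M$, and normalization gives $\nu=\mu_\kappa$.

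As a backup route, I would use the strong Feller property together with topological irreducibility of the elliptic diffusion on a compact connected manifold, via the Stroock--Varadhan support theorem. By Doob's theorem this also yields uniqueness.
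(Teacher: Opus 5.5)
Your proposal is correct and follows the paper's route. Part (a) is the same computation: the bracket reduces to $-f_i(\phi)f_i$, the identities $F_1^2+F_2^2=1$ and $F_1F_1'+F_2F_2'=0$ collapse the second-order term, and the decomposition $\sum_i f_if_i\psi=\Delta\psi+\sum_i(\nabla_{f_i}f_i)\psi$ absorbs $\bar f_0$. Part (b) rests on the same divergence structure $\mathcal{L}^*u=\frac{1}{2\kappa}\Div\big(e^{-2\kappa\phi}\Grad(e^{2\kappa\phi}u)\big)$; the one difference is that you actually prove uniqueness (hypoellipticity, then $\int\rho|\Grad h|^2\,\volg=0$ and connectedness), whereas the paper only asserts it from uniform ellipticity and compactness.
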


\begin{remark}
A key implication of Theorem \ref{thm:generator} is that the infinitesimal generator of the SDE \eqref{eq:sde} coincides with the generator of the \emph{Riemannian Langevin SDE} defined by 
\begin{equation}\label{LSDE}
\mathrm{d}x=-\Grad\phi(x)\mathrm{d}t+\kappa^{-1/2} dB^g,
\end{equation}
where $B^g$ is the Brownian motion on $\M$, with respect to the metric $g$, whose generator is $\Delta$ \cite[Chapter V]{ikeda1981stochastic}. This relationship plays a key role in our analysis. \hfill $\square$
\end{remark}

\vspace{0.1cm}
The relationship between the SDE \eqref{eq:sde} and the Langevin dynamics \eqref{LSDE} establishes \eqref{eq:sde} as a suitable optimization algorithm. In particular, by leveraging the explicit form of the invariant measure $\mu_\kappa$, provided by Theorem \ref{thm:generator}, we can study its asymptotic behavior when $\kappa \gg 1$. To that end, we have the following lemma. We recall that $S_\star$ denotes the set of minimizers of $\phi$.

\begin{lemma}\label{lem:conc}
For every closed $K\subseteq\M$ with $K\cap S_\star=\emptyset$ there exist $C_K,c_K>0$, such that
\begin{equation}\label{eq:conc}
\mu_\kappa(K)\ \le\ C_K\,e^{-2\kappa c_K},
\end{equation}
for all $\kappa>0$. In particular, $\mu_\kappa(U)\to1$ as $\kappa\to\infty$ for every open $U\supset S_\star$. \hfill $\square$
\end{lemma}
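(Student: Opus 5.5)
The plan is a Laplace-principle estimate that compares the mass on $K$ with the mass near $S_\star$. First, since $K$ is closed in the compact manifold $\M$, it is compact. Because $\phi$ is continuous and $K\cap S_\star=\emptyset$, we have $\phi>\phi_\star$ on $K$. Hence $m_K:=\min_K\phi>\phi_\star$, provided $K\neq\emptyset$; the empty case is trivial. Set $\gamma:=m_K-\phi_\star>0$. By continuity of $\phi$, and since $S_\star\neq\emptyset$ by compactness, the set
\[
V:=\{x\in\M:\ \phi(x)<\phi_\star+\gamma/2\}
\]
is open and nonempty, and so $\volg(V)>0$.

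Next we bound numerator and denominator of $\mu_\kappa(K)=c_\kappa^{-1}\int_K e^{-2\kappa\phi}\volg$ separately. For the numerator, $\int_K e^{-2\kappa\phi}\volg\le \mathrm{Vol}(\M)\,e^{-2\kappa(\phi_\star+\gamma)}$. For the denominator, $c_\kappa\ge\int_V e^{-2\kappa\phi}\volg\ge \volg(V)\,e^{-2\kappa(\phi_\star+\gamma/2)}$. Dividing, the factor $e^{-2\kappa\phi_\star}$ cancels, and we get
\[
\mu_\kappa(K)\le \frac{\mathrm{Vol}(\M)}{\volg(V)}\,e^{-2\kappa\gamma/2}.
\]
This gives \eqref{eq:conc} with $C_K:=\mathrm{Vol}(\M)/\volg(V)$ and $c_K:=\gamma/2$, uniformly in $\kappa>0$.

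For the second claim, take an open $U\supset S_\star$ and set $K:=\M\setminus U$. This set is closed and disjoint from $S_\star$. Since $\mu_\kappa$ is a probability measure by Theorem \ref{thm:generator}(b), we have $\mu_\kappa(U)=1-\mu_\kappa(K)\ge 1-C_Ke^{-2\kappa c_K}\to1$.

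There is no real obstacle here. The only delicate point is obtaining a strictly positive lower bound on the denominator independent of $\kappa$ after normalization. This is why we use the open sublevel set $V$, which has positive Riemannian volume, rather than $S_\star$ itself, which may have measure zero. Using a margin of $\gamma/2$ leaves a positive exponential gap.
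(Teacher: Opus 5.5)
Your proof is correct and follows essentially the same route as the paper: you bound the mass of $K$ above by $e^{-2\kappa m_K}$ times a volume, and you bound the normalizer $c_\kappa$ below by the mass of a sublevel set $\{\phi\le\phi_\star+\epsilon\}$, with your fixed margin $\epsilon=\gamma/2$. The differences are cosmetic: you use $\mathrm{Vol}(\M)$ rather than $\volg(K)$ in $C_K$, and your open sublevel set $V$ makes the positivity of its volume explicit, which the paper's closed set $N$ leaves implicit.
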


\vspace{0.1cm}
Although the invariant measure $\mu_\kappa$ concentrates on the set of minimizers $S_\star$ in the limit $\kappa\rightarrow \infty$, the convergence of the marginals of the process \eqref{eq:sde} to the invariant measure $\mu_\kappa$ is, in general, slow for large $\kappa$. This is due to the fact that, for a general compact boundary-less manifold $\mathcal{M}$ and a general function $\phi$, the spectral gap of the operator $\mathcal{L}$ may decay in the limit $\kappa\rightarrow \infty$. Nevertheless, for a fixed $\kappa>0$, the spectral gap of $\mathcal{L}^*$ is uniformly bounded away from zero and, as a result, convergence is guaranteed. On the other hand, there is an important case wherein the spectral gap of $\mathcal{L}^*$ can be uniformly upper bounded away from $0$ in the limit $\kappa\rightarrow\infty$. The following lemma formalizes these claims.
\vspace{0.1cm}
\begin{lemma}\label{lem:convergence}
    For any $\kappa>0$, let $\rho_\kappa$ be the density of the invariant measure $\mu_\kappa$ given by \eqref{eq:gibbs}. Then, the following holds:
    \begin{enumerate}[(a)]
    \item There exists $\lambda_\kappa > 0$ such that for any initial probability density $\rho_0\in C^\infty(\mathcal{M})$ the solution $\rho_t$ to the Fokker-Planck equation
    \begin{align}
        \partial_t\rho_t = \mathcal{L}^*\rho_t = \Div(\rho_t \Grad\phi) + \frac{1}{2\kappa}\Delta  \rho_t,
    \end{align}
    satisfies
\begin{align}\label{eq:exp_bound}
        \|\rho_t-\rho_\kappa\|_{L^1(\volg)}&\leq \mathrm{e}^{-\lambda_\kappa t}\|\rho_0-\rho_\kappa\|_{L^2(\rho_{\kappa}^{-1}\volg)},
    \end{align}
    for all $t>0$.
    \vspace{0.1cm}
    \item If $\phi\in C^\infty(\mathcal{M})$ is a Morse function with a unique local minimum, i.e., there exists a unique critical point $x_\star\in\mathcal{M}$ of $\phi$ for which $\Hess \phi(x_\star)\succ0$, then there exists $\lambda > 0$ such that \eqref{eq:exp_bound} holds uniformly for all $\kappa > 0$. \hfill $\square$ 
    \end{enumerate}
\end{lemma}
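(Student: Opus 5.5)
Our plan is to use the standard symmetrization of the Fokker--Planck operator in the weighted space $L^2(\rho_\kappa^{-1}\volg)$ and obtain part (a) from a Poincaré inequality. For (b) we then need that Poincaré constant to be uniform in $\kappa$.

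\emph{Reduction to a Poincaré inequality.} Write $h_t:=\rho_t/\rho_\kappa$ and use $\Grad\rho_\kappa=-2\kappa\rho_\kappa\Grad\phi$. A direct computation gives
\begin{equation*}
\mathcal L^*\rho=\tfrac{1}{2\kappa}\Div\big(\rho_\kappa\Grad(\rho/\rho_\kappa)\big).
\end{equation*}
Hence $h_t$ solves $\partial_t h=\mathcal L h$, and $\mathcal L$ is self-adjoint in $L^2(\mu_\kappa)$ with Dirichlet form
\begin{equation*}
\mathcal E(h)=\tfrac{1}{2\kappa}\int_\M|\Grad h|^2\,\mathrm d\mu_\kappa.
\end{equation*}
Mass conservation gives $\int(h_t-1)\,\mathrm d\mu_\kappa=0$, and
\begin{equation*}
\tfrac{\mathrm d}{\mathrm dt}\|h_t-1\|^2_{L^2(\mu_\kappa)}=-2\,\mathcal E(h_t).
\end{equation*}
Suppose the Poincaré inequality $\mathrm{Var}_{\mu_\kappa}(h)\le C_P(\kappa)\int|\Grad h|^2\,\mathrm d\mu_\kappa$ holds. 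Then Grönwall yields
\begin{equation*}
\|h_t-1\|_{L^2(\mu_\kappa)}\le e^{-\lambda_\kappa t}\|h_0-1\|_{L^2(\mu_\kappa)},\qquad \lambda_\kappa:=\frac{1}{2\kappa C_P(\kappa)}.
\end{equation*}
The estimate \eqref{eq:exp_bound} follows because $\|h-1\|_{L^2(\mu_\kappa)}=\|\rho-\rho_\kappa\|_{L^2(\rho_\kappa^{-1}\volg)}$, and by Cauchy--Schwarz
\begin{equation*}
\|\rho-\rho_\kappa\|_{L^1(\volg)}=\|h-1\|_{L^1(\mu_\kappa)}\le\|h-1\|_{L^2(\mu_\kappa)}.
\end{equation*}
Smoothness of $\rho_t$ comes from parabolic regularity on the compact manifold, since $\rho_0\in C^\infty$. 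That justifies the differentiation.

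\emph{Part (a).} For fixed $\kappa$, $\rho_\kappa$ is bounded above and below by $c_\kappa^{-1}e^{-2\kappa\max\phi}$ and $c_\kappa^{-1}e^{-2\kappa\phi_\star}$. The Riemannian volume on a compact connected manifold satisfies a Poincaré inequality with constant $1/\lambda_1(\M)$, where $\lambda_1(\M)$ is the first nonzero Laplace eigenvalue. By Holley--Stroock perturbation we then get
\begin{equation*}
C_P(\kappa)\le e^{2\kappa\,\mathrm{osc}\,\phi}/\lambda_1(\M),
\end{equation*}
so $\lambda_\kappa>0$ as claimed. Equivalently, one can use the discreteness of the spectrum of the elliptic operator $\mathcal L$ together with the fact that its kernel consists of the constants, by connectedness.

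\emph{Part (b), the main obstacle.} Here we need $\kappa C_P(\kappa)$ bounded as $\kappa\to\infty$, which Holley--Stroock cannot give. Under the Morse assumption with a unique local minimum, the measure $\mu_\kappa$ concentrates in a Gaussian fashion at $x_\star$ with width $\kappa^{-1/2}$, so we expect $C_P(\kappa)\sim 1/\kappa$.

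The plan is the Witten-Laplacian/semiclassical route:
\begin{itemize}
\item Unitarily conjugate by $\rho_\kappa^{1/2}$. This turns $-2\kappa\mathcal L$ into the Witten Laplacian
\begin{equation*}
\Delta_{\phi,h}=-h^2\Delta+|\Grad\phi|^2-h\Delta\phi,\qquad h=1/(2\kappa),
\end{equation*}
up to scaling.
\item Apply the harmonic approximation (Simon; Helffer--Sjöstrand). The low-lying eigenvalues of $\Delta_{\phi,h}$ are $h$ times the eigenvalues of the harmonic oscillators at the critical points, plus $o(h)$.
\item At the minimum, the harmonic model has ground energy $0$ and gap $2\lambda_{\min}(\Hess\phi(x_\star))h$.
\item At every other critical point, which is a saddle or maximum of index $\ge1$, the model's lowest energy is at least $2\min|\text{eig}|\,h>0$.
\item Since $x_\star$ is the only minimum, there are no exponentially small eigenvalues (no tunnelling). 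Hence the second eigenvalue is at least $c\,h$, which gives a $\kappa$-uniform gap $\lambda\ge c'$ for $\kappa\ge\kappa_0$.
\end{itemize}
An alternative is a Lyapunov-function argument in the style of Bakry--Barthe--Cattiaux--Guillin. It uses $W=e^{\kappa\phi}$-type functions, with a local Poincaré inequality near $x_\star$ coming from strong convexity in Morse coordinates.

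For $\kappa\in(0,\kappa_0]$ the constant from part (a) is continuous in $\kappa$ and bounded below. We expect it to stay bounded as $\kappa\to0$, where $\lambda_\kappa\approx\lambda_1(\M)/(2\kappa)$. Taking the minimum of the two regimes gives a uniform $\lambda$.

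The delicate point is making the semiclassical lower bound rigorous on a manifold with the gradient and Laplacian of the metric \eqref{eq:metric}. The cleanest way is an IMS localization with partitions of unity at scale $h^{2/5}$ around the critical points, combined with $|\Grad\phi|^2\ge c>0$ away from them.
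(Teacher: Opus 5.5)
Your proposal is correct and follows essentially the paper's route: in (a) the substitution $h_t=\rho_t/\rho_\kappa$ turns the Fokker--Planck flow into $\partial_t h=\mathcal L h$, the spectral gap gives exponential decay in $L^2(\mu_\kappa)$, and Cauchy--Schwarz passes to $L^1(\volg)$; in (b) the paper simply cites a semiclassical spectral result (Kolokoltsov) of the Witten-Laplacian/harmonic-approximation type you sketch. Your version is more explicit than the paper's, and your own Holley--Stroock bound $\lambda_\kappa\ge\lambda_1(\M)e^{-2\kappa\,\mathrm{osc}\,\phi}/(2\kappa)$ is decreasing in $\kappa$, so it already gives the uniform lower bound on $(0,\kappa_0]$ in (b) without any continuity argument or heuristic about $\kappa\to0$.
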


\vspace{0.2cm}
Having characterized the invariant measure of the limiting Riemannian Langevin diffusion, we next quantify the rate at which the distribution of its solutions approaches this measure. The following result establishes exponential convergence in total variation, with a rate determined by the spectral gap of the infinitesimal generator.
\begin{lemma}\label{lem:conv_sde_FK}
    For any $\kappa>0$, let $\lambda_\kappa>0$ be the spectral gap of $\mathcal{L}$. Let $\bar{x}_t$ be a solution of the SDE \eqref{eq:sde} such that the initial condition $\bar{x}_0\sim \rho_o\,\volg$, for any smooth probability distribution $\rho_0$. Then, for every measurable set $U\subset\mathcal{M}$, we have that
    \begin{align}
        |\mathbb{P}\{\bar{x}_t\in U\}-\mu_\kappa(U)|\leq \frac{1}{2}\|\rho_0-\rho_\kappa\|_{L^2(\rho_{\kappa}^{-1}\volg)}\mathrm{e}^{-\lambda_\kappa t},
    \end{align}
    for all $t\geq 0$. In particular, if $\rho_0 = (\int_{\mathcal{M}} \volg)^{-1}$, i.e. the uniform probability density, then
    \begin{align}
        |\mathbb{P}\{\bar{x}_t\in U\}-\mu_\kappa(U)|\leq \frac{1}{2}\mathrm{e}^{\kappa\gamma_\phi-\lambda_\kappa t},
    \end{align}
    where $\gamma_\phi:=\max_{x\in\mathcal{M}}\phi(x) - \min_{x\in\mathcal{M}}\phi(x)$. \hfill $\square$
\end{lemma}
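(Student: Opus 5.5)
The plan is to reduce the statement to Lemma \ref{lem:convergence}(a) by identifying the law of $\bar x_t$ with the solution of the Fokker--Planck equation. First, the law of $\bar x_t$ has a density $\rho_t$ with respect to $\volg$. This follows from Theorem \ref{thm:generator}(a): the generator $\mathcal{L}=\frac{1}{2\kappa}\Delta-\langle\Grad\phi,\Grad\cdot\rangle$ is uniformly elliptic with smooth coefficients on the compact manifold $\M$. For every $\psi\in C^\infty(\M)$, Itô's formula (Dynkin) gives $\frac{d}{dt}\mathbb{E}[\psi(\bar x_t)]=\mathbb{E}[\mathcal{L}\psi(\bar x_t)]$. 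Hence the marginal law solves $\partial_t\nu_t=\mathcal{L}^*\nu_t$ weakly with $\nu_0=\rho_0\volg$.

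By parabolic regularity and uniqueness of weak (distributional) solutions to this smooth, uniformly parabolic equation on a compact manifold, $\nu_t=\rho_t\volg$, where $\rho_t$ is the classical solution from Lemma \ref{lem:convergence}(a). Integrating by parts shows that $\mathcal{L}^*\rho=\Div(\rho\Grad\phi)+\frac{1}{2\kappa}\Delta\rho$, which matches the operator in that lemma.

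Next I bound the event probability. For measurable $U$,
\begin{align*}
|\mathbb{P}\{\bar x_t\in U\}-\mu_\kappa(U)|=\Big|\int_U(\rho_t-\rho_\kappa)\,\volg\Big|.
\end{align*}
Since both densities integrate to one, $\int(\rho_t-\rho_\kappa)\,\volg=0$. So the integral over $U$ equals minus the integral over $U^c$, and therefore it is bounded by $\frac12\|\rho_t-\rho_\kappa\|_{L^1(\volg)}$. Applying \eqref{eq:exp_bound} then yields the first claim. Here $\lambda_\kappa$ is identified with the spectral gap: the proof of Lemma \ref{lem:convergence}(a) comes from the $L^2(\rho_\kappa^{-1}\volg)$ contraction $\|\rho_t-\rho_\kappa\|_{L^2(\rho_\kappa^{-1})}\le e^{-\lambda_\kappa t}\|\rho_0-\rho_\kappa\|_{L^2(\rho_\kappa^{-1})}$. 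In that contraction $\mathcal{L}^*$ is self-adjoint in the weighted space and is unitarily equivalent to $\mathcal{L}$ on $L^2(\mu_\kappa)$, so it has the same gap. Cauchy--Schwarz with $\int\rho_\kappa\,\volg=1$ then bounds the $L^1$ norm by that weighted $L^2$ norm.

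For the uniform initial density, I compute
\begin{align*}
\|\rho_0-\rho_\kappa\|^2_{L^2(\rho_\kappa^{-1}\volg)}=\int\frac{\rho_0^2}{\rho_\kappa}\volg-1\le \int\frac{\rho_0^2}{\rho_\kappa}\volg.
\end{align*}
With $\rho_0=V^{-1}$, where $V=\mathrm{Vol}(\M)$, and $\rho_\kappa=c_\kappa^{-1}e^{-2\kappa\phi}$, this becomes $V^{-2}c_\kappa\int e^{2\kappa\phi}\volg$. Using $c_\kappa\le Ve^{-2\kappa\phi_\star}$ and $\int e^{2\kappa\phi}\volg\le Ve^{2\kappa\max\phi}$ gives a bound of $e^{2\kappa\gamma_\phi}$. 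Taking the square root gives $e^{\kappa\gamma_\phi}$, which is the stated estimate.

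I expect the main obstacle to be the first step: rigorously identifying the law of the Stratonovich SDE with the classical Fokker--Planck solution, including uniqueness of measure-valued weak solutions. I would handle it by duality. Solve the backward Kolmogorov equation $\partial_s u=\mathcal{L}u$ with smooth terminal data, using smooth elliptic theory on compact $\M$. Then apply Itô's formula to $u(t-s,\bar x_s)$, which shows $\mathbb{E}\psi(\bar x_t)=\int \bar P_t\psi\,\rho_0\volg=\int\psi\rho_t\,\volg$.
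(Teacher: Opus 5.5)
Your proposal is correct and follows the paper's argument. You bound the event probability by $\frac12\|\rho_t-\rho_\kappa\|_{L^1(\volg)}$, invoke Lemma \ref{lem:convergence}(a), and bound $\int\rho_0^2\rho_\kappa^{-1}\volg$ by $e^{2\kappa\gamma_\phi}$ for the uniform density. The paper reaches that last bound via the pointwise estimate $\rho_\kappa\ge e^{-2\kappa\gamma_\phi}/\mathrm{Vol}(\M)$, which is equivalent to your separate bounds on $c_\kappa$ and $\int e^{2\kappa\phi}\volg$. Your duality argument identifying the law of $\bar x_t$ with the classical Fokker--Planck solution fills in a step the paper simply asserts, so it adds rigor without changing the route.
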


\vspace{0.1cm}
Based on the properties we have obtained so far for the limiting SDE \eqref{eq:sde}, we may now utilize the stochastic convergence-of-trajectories property established by Theorem \ref{thm:scot}. In particular, if $x^\varepsilon_t$ and $\bar{x}_t$ are the solutions for the random ODE \eqref{eq:rode} and the SDE \eqref{eq:sde}, respectively, from the same initial condition $x^\varepsilon_0=\bar{x}_0\sim\rho_0\,\volg$ and for the same driving Brownian motion path $W$, then the bound in Theorem \ref{thm:scot} implies that the probability measure of $x_t$ convergences to the probability measure of $\bar{x}_t$ in the limit $\varepsilon\rightarrow 0$, uniformly on any compact interval $[0,T]$. In particular, we have the following proposition.
\begin{proposition}\label{prop:finite_horizon} 
    For any $\kappa>0$, let $\lambda_\kappa>0$ be the spectral gap of $\mathcal{L}$. Then, for any $T>0$, any $\epsilon>0$, any $\kappa>0$, and any open set $U\subset \mathcal{M}$ with a smooth boundary, there exists $\varepsilon^*\in(0,\infty)$ such that the following bound holds
    \begin{align*}
        |\mathbb{P}\{x^\varepsilon_t\in U\}-\mu_{\kappa}(U)| < \frac{1}{2}\|\rho_0-\rho_\kappa\|_{L^2(\rho_{\kappa}^{-1}\volg)} \mathrm{e}^{-\lambda_\kappa t} + \epsilon.
    \end{align*}
    for all $\varepsilon\in(0,\varepsilon^*)$ and all $t\in[0,T]$. \hfill $\square$ 
\end{proposition}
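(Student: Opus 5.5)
The plan is the triangle inequality
\begin{align*}
|\mathbb{P}\{x^\varepsilon_t\in U\}-\mu_\kappa(U)|\le |\mathbb{P}\{x^\varepsilon_t\in U\}-\mathbb{P}\{\bar x_t\in U\}|+|\mathbb{P}\{\bar x_t\in U\}-\mu_\kappa(U)|,
\end{align*}
where $x^\varepsilon_t$ and $\bar x_t$ start from the same $x_0\sim\rho_0\,\volg$ and are driven by the same path $W$. Lemma \ref{lem:conv_sde_FK} bounds the second term by $\frac12\|\rho_0-\rho_\kappa\|_{L^2(\rho_\kappa^{-1}\volg)}e^{-\lambda_\kappa t}$. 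It therefore suffices to make the first term smaller than $\epsilon$, uniformly in $t\in[0,T]$, by taking $\varepsilon$ small. Since $\rho_0$ is independent of $W$, Theorem \ref{thm:scot} gives, after integrating over $x_0$ and applying Markov's inequality,
\begin{align*}
\sup_{t\in[0,T]}\mathbb{P}\{|x^\varepsilon_t-\bar x_t|\ge\eta\}\le \eta^{-2}\,\mathbb{E}\Big[\sup_{t\in[0,T]}|x^\varepsilon_t-\bar x_t|^2\Big]<\eta^{-2}\delta
\end{align*}
for every $\eta>0$ and all $\varepsilon<\varepsilon^*(\delta,T)$.

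Next we pass from closeness in probability to closeness of hitting probabilities of $U$. The event $\{x^\varepsilon_t\in U\}\,\triangle\,\{\bar x_t\in U\}$ is contained in $\{|x^\varepsilon_t-\bar x_t|\ge\eta\}\cup\{\bar x_t\in(\partial U)^{\eta'}\}$, where $\eta'$ is a constant multiple of $\eta$ accounting for the equivalence of the geodesic and ambient distances. Hence
\begin{align*}
|\mathbb{P}\{x^\varepsilon_t\in U\}-\mathbb{P}\{\bar x_t\in U\}|\le \eta^{-2}\delta+\sup_{t\in[0,T]}\mathbb{P}\{\bar x_t\in(\partial U)^{\eta'}\}.
\end{align*}
We first choose $\eta$ so that the boundary-layer term is below $\epsilon/2$. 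Then we choose $\delta=\eta^2\epsilon/2$, and set $\varepsilon^*=\varepsilon^*(\delta,T)$ from Theorem \ref{thm:scot}.

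The main obstacle is bounding the boundary-layer term uniformly in $t\in[0,T]$. The law of $\bar x_t$ is $\rho_t\,\volg$, where $\rho_t$ solves the Fokker--Planck equation of Lemma \ref{lem:convergence} with smooth initial data $\rho_0$. Since $\mathcal{L}^*$ is uniformly elliptic on a compact manifold with smooth coefficients, standard parabolic theory (or the maximum principle applied to $\rho_t$) gives $\sup_{t\in[0,T]}\|\rho_t\|_\infty\le C_T<\infty$. Because $\partial U$ is a smooth compact hypersurface, tubular-neighbourhood estimates give $\mathrm{vol}((\partial U)^{\eta'})\le C\eta'$. Combining the two,
\begin{align*}
\sup_{t\in[0,T]}\mathbb{P}\{\bar x_t\in(\partial U)^{\eta'}\}\le C_T\,C\,\eta'\to0\quad\text{as }\eta\to0.
\end{align*}
This makes the choice of $\eta$ above legitimate and completes the argument; the smoothness of $\partial U$ is used only in this step. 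If the sup-norm bound on $\rho_t$ were in doubt, one could instead use $L^2(\rho_\kappa^{-1}\volg)$-contractivity of the semigroup together with Cauchy--Schwarz, which gives the weaker bound $\sqrt{\mathrm{vol}((\partial U)^{\eta'})}$; this still tends to zero uniformly in $t$.
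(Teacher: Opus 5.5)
Your proof is correct. It uses the same decomposition as the paper: the triangle inequality through $\mathbb{P}\{\bar x_t\in U\}$, Lemma~\ref{lem:conv_sde_FK} for the diffusion term, and Theorem~\ref{thm:scot} for the coupling term. Where you differ is in how you control the coupling term.

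\textbf{The paper's step.} The paper argues qualitatively. Since $\rho_0$ is smooth, the law of $\bar x_t$ has a density, so $U$ is a continuity set of that law. The mean-square closeness of Theorem~\ref{thm:scot}, hence closeness in law, then gives $|\mathbb{P}\{x^\varepsilon_t\in U\}-\mathbb{P}\{\bar x_t\in U\}|<\epsilon$. Read literally, this portmanteau step yields convergence for each fixed $t$. The uniformity over $t\in[0,T]$ in the statement is asserted rather than derived. Justifying it would need, e.g., a Dini-type compactness argument in $t$, or a uniform control of the mass of $\mathrm{law}(\bar x_t)$ near $\partial U$.

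\textbf{Your step.} You make this uniform control explicit by combining three ingredients:
\begin{enumerate}
\item Markov's inequality applied to Theorem~\ref{thm:scot}, after integrating over $x_0$;
\item the geodesic-crossing inclusion $\{x^\varepsilon_t\in U\}\triangle\{\bar x_t\in U\}\subseteq\{|x^\varepsilon_t-\bar x_t|\ge\eta\}\cup\{\bar x_t\in(\partial U)^{\eta'}\}$;
\item the bound $\sup_{t\le T}\|\rho_t\|_\infty\,\mathrm{vol}((\partial U)^{\eta'})=O(\eta)$.
\end{enumerate}
This delivers the uniformity in $t$ and an explicit choice $\varepsilon^*=\varepsilon^*(\eta^2\epsilon/2,T)$.

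\textbf{Trade-offs.} The paper's route is shorter and needs only absolute continuity of each law. Yours needs the extra parabolic input, the sup-norm bound on $\rho_t$, but that is cheap. In fact, writing $\rho_t/\rho_\kappa=e^{t\mathcal{L}}(\rho_0/\rho_\kappa)$ as in the proof of Lemma~\ref{lem:convergence}, sup-norm contractivity of the Markov semigroup gives $\|\rho_t\|_\infty\le\|\rho_\kappa\|_\infty\|\rho_0/\rho_\kappa\|_\infty$ uniformly for all $t\ge0$. Your argument also shows that smoothness of $\partial U$ enters only through $\mathrm{vol}((\partial U)^{\eta'})\to0$, so $\mathrm{vol}(\partial U)=0$ would suffice.
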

\section{INVARIANT MEASURES OF THE ALGORITHM}
\label{sec:chain}
The exponential bound established in Proposition \ref{prop:finite_horizon} is restricted to compact time intervals and therefore does not characterize the asymptotic behavior of the system as $t\to\infty$. In this section, we remove that limitation by directly analyzing the limiting behavior of the probability measure of \eqref{eq:rode}. 
Throughout, the constants $\varepsilon,\kappa>0$ are fixed unless stated otherwise. Also,  fo a Markov transition kernel $P:\M\times\M\rightarrow[0,1]$ on $\M$, we write
$P\psi(x):=\int_\M\psi(y)\,P(x,\mathrm dy)$, and $P^k$ for the $k$-step kernel.

%\subsection{Markovianity, invariant measures, and convergence to the Gibbs measure}
%

\vspace{0.1cm}
Let $t=k\varepsilon$ and define the sampled process $X_k:=x^\varepsilon_{k\varepsilon}$, where $x^\varepsilon_{t}$ is a solution to \eqref{eq:rode}. Then, we have the following. 
\begin{lemma}\label{lem:markov}
There exists a measurable map $\Psi_\varepsilon:\M\times\mathbb{R}^{2r}\to\M$ such that the sampled process $X_k$ satisfies 
$$X_{k+1}=\Psi_\varepsilon(X_k,{w}_k),$$
where the increments ${w}_k$ are i.i.d. samples from $\mathcal{N}(0,\varepsilon I_{2r})$. That is, $(X_k)$ is a time-homogeneous Markov chain with the transition kernel
$$P_\varepsilon(x,A)=\mathbb{P}\{\Psi_\varepsilon(x,{w})\in A\}.$$
Moreover, $P_\varepsilon\psi \in C(\M)$ for any $\psi\in C(\M)$, where
$$P_\varepsilon\psi(x):=\int_{\mathbb{R}^{2r}}\psi(\Psi_\varepsilon(x,{w}))\,\gamma_\varepsilon(\mathrm{d}{w}),~~\gamma_\varepsilon:=\mathcal{N}(0,\varepsilon I_{2r}).$$
That is, $P_\varepsilon$ maps $C(\M)$ into $C(\M)$. \hfill $\square$
\end{lemma}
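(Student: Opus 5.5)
The plan is to exploit the fact that, on each interval $[k\varepsilon,(k+1)\varepsilon)$, the driving signal $b_\varepsilon(t,W)$ depends only on the current increment $w_k$ and on the rescaled time $s=(t-k\varepsilon)/\varepsilon\in[0,1)$. Indeed, differentiating \eqref{eq:B} gives
\begin{equation*}
b^{2i+j-2}_\varepsilon(t,W)=\tfrac{1}{\varepsilon}\,w_k^{2i+j-2}\,\big(U^{\iota_i^j(w_k)}_\kappa\big)'(s).
\end{equation*}
The level $W_{k\varepsilon}$ drops out, so the right-hand side of \eqref{eq:rode} on that interval is a time-dependent vector field $G(x,s,w_k)$. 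This field is smooth in $x$, continuous in $s$, and linear in $w$, with coefficients selected by the sign pattern of $w$.

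I would therefore set $\Psi_\varepsilon(x,w)$ to be the value at time $\varepsilon$ of the solution of $\dot y=G(y,(t)/\varepsilon,w)$ started at $y(0)=x$. Global existence and confinement to $\M$ follow as in the paper: compactness gives boundedness, and tangency keeps $y$ on $\M$. By uniqueness of solutions, $X_{k+1}=\Psi_\varepsilon(X_k,w_k)$.

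Measurability of $\Psi_\varepsilon$ follows by splitting $\mathbb{R}^{2r}$ into the finitely many Borel cones determined by the signs of the products $w^{2i-1}w^{2i}$. On each cone the selection $\iota(w)$ is constant, so $G$ is jointly continuous in $(x,w)$ and locally Lipschitz in $x$. Continuous dependence on initial data and parameters then makes $\Psi_\varepsilon$ continuous on each piece, hence Borel on $\M\times\mathbb{R}^{2r}$. The increments $w_k$ of the Wiener process are i.i.d. $\mathcal N(0,\varepsilon I_{2r})$. Moreover, $X_k$ is a measurable function of $x_0,w_0,\dots,w_{k-1}$, so it is independent of $w_k$. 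The standard random-mapping representation then gives the Markov property with time-homogeneous kernel $P_\varepsilon(x,A)=\gamma_\varepsilon\{w:\Psi_\varepsilon(x,w)\in A\}$.

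For the Feller property, I fix $\psi\in C(\M)$ and $x_n\to x$. For every $w$ outside the boundaries of the cones, $\Psi_\varepsilon(\cdot,w)$ is continuous, so $\psi(\Psi_\varepsilon(x_n,w))\to\psi(\Psi_\varepsilon(x,w))$. The cone boundaries lie in the union of the hyperplanes $\{w^\ell=0\}$, which is $\gamma_\varepsilon$-null. Since $|\psi|\le\|\psi\|_\infty$, dominated convergence gives $P_\varepsilon\psi(x_n)\to P_\varepsilon\psi(x)$.

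The main obstacle is the discontinuity of $w\mapsto\Psi_\varepsilon(x,w)$ across the swapping boundaries. The argument above avoids needing any continuity in $w$: it only uses continuity in $x$ for fixed $w$ off a null set, together with the observation that the exceptional set does not depend on $x$. A secondary point is that, for fixed $w$, the field is continuous in $x$ uniformly in $s$, since the coefficients $U'$ are bounded. This is what makes the ODE continuous-dependence theorem apply.
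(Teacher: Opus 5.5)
Your proposal is correct and follows essentially the same route as the paper. Both use the one-step flow map of the (rescaled) ODE driven only by $w_k$, get Borel measurability from continuity on each of the finitely many sign-pattern pieces, and prove the Feller property by dominated convergence, with the discontinuity set confined to the $\gamma_\varepsilon$-null union of coordinate hyperplanes.

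As a minor sharpening of both arguments: for every fixed $w$, including $w$ on those hyperplanes, the branch $\iota(w)$ is fixed. Hence $x\mapsto\Psi_\varepsilon(x,w)$ is continuous for all $w$, and the null-set exclusion is not actually needed.
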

The Markovian characterization of the sampled dynamics in
Lemma~\ref{lem:markov} enables us to study their long-run behavior through
the associated transition kernel $P_\varepsilon$. In particular, the
Feller property established above, together with the compactness of
$\mathcal{M}$, guarantees the existence of an invariant probability
measure for every fixed $\varepsilon>0$, as formalized next.
\begin{proposition}\label{prop:existence}
For every $\varepsilon>0$, the chain $(X_k)$ admits an invariant probability measure $\pi_\varepsilon\in\Pcal$. \hfill $\square$
\end{proposition}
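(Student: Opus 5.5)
The plan is the Krylov--Bogoliubov argument. It uses two ingredients: the Feller property of $P_\varepsilon$ from Lemma~\ref{lem:markov}, and the compactness of $\M$.

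Fix any $x_0\in\M$ and let $\delta_{x_0}P_\varepsilon^k$ denote the law of $X_k$ started at $x_0$. Form the Ces\`aro averages
\[
\nu_N:=\frac{1}{N}\sum_{k=0}^{N-1}\delta_{x_0}P_\varepsilon^k\in\Pcal .
\]
Since $\M$ is a compact metric space, $\Pcal$ is compact in the weak topology: by Riesz representation it is the weak-$*$ closed unit-mass part of the positive unit ball of $C(\M)^*$, and $C(\M)$ is separable, so sequential compactness holds. Equivalently, the family $\{\nu_N\}$ is trivially tight and Prokhorov's theorem applies. Hence there is a subsequence $\nu_{N_j}\rightharpoonup\pi_\varepsilon\in\Pcal$.

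It then remains to verify invariance. For any $\psi\in C(\M)$, telescoping gives
\[
\Big|\int P_\varepsilon\psi\,\mathrm d\nu_N-\int\psi\,\mathrm d\nu_N\Big|=\frac1N\Big|P_\varepsilon^N\psi(x_0)-\psi(x_0)\Big|\le\frac{2\|\psi\|_\infty}{N}\to0 .
\]
By Lemma~\ref{lem:markov}, $P_\varepsilon\psi\in C(\M)$, so we may pass to the limit along $N_j$ in both integrals. This yields $\int P_\varepsilon\psi\,\mathrm d\pi_\varepsilon=\int\psi\,\mathrm d\pi_\varepsilon$ for all $\psi\in C(\M)$, that is, $\pi_\varepsilon P_\varepsilon$ and $\pi_\varepsilon$ integrate every continuous function identically. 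Finite Borel measures on a compact metric space are determined by their integrals against $C(\M)$, so $\pi_\varepsilon P_\varepsilon=\pi_\varepsilon$.

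The only step that needs care is the last one. Passing to the limit in $\int P_\varepsilon\psi\,\mathrm d\nu_{N_j}$ is exactly where the Feller property is used, and this has already been supplied by Lemma~\ref{lem:markov}. The argument must also use the identity $\int P_\varepsilon\psi\,\mathrm d\nu=\int\psi\,\mathrm d(\nu P_\varepsilon)$, which follows from Fubini applied to the measurable kernel $P_\varepsilon(x,A)=\mathbb P\{\Psi_\varepsilon(x,w)\in A\}$. Beyond this, no genuine obstacle is expected.
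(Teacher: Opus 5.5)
Your proposal is correct and takes the same route as the paper. The paper proves this by citing the weak Feller property from Lemma~\ref{lem:markov} together with Meyn--Tweedie's Prop.~12.0.1 (existence of an invariant probability for weak Feller chains on compact spaces), and your Krylov--Bogoliubov Ces\`aro-average argument is exactly the content of that cited result, written out explicitly.
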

%
%\subsection{Convergence to the Gibbs measure}
Having established the existence of an invariant measure, it is natural to study the long-time behavior of the chain. The following result characterizes such behavior (cf. \eqref{dbldef}):
\begin{theorem}\label{thm:lawconv}
Fix $\kappa>0$ and let $\mu_\kappa$ be the unique invariant measure \eqref{eq:gibbs}. Then, for every $\delta>0$, there exist $T>0$ and $\varepsilon^\ast>0$ such that, for all
$\varepsilon\in(0,\varepsilon^\ast]$ and all integers $k\ge T/\varepsilon$, we have that
$$
\sup_{x\in\M}\ d_{\mathrm{BL}}\big(P^{k}_\varepsilon(x,\cdot),\,\mu_\kappa\big)\ \le\ \delta.
$$
In particular, every invariant probability
measure $\mu_\varepsilon$ of $P_\varepsilon$ satisfies $d_{\mathrm{BL}}(\mu_\varepsilon,\mu_\kappa)\le\delta$
for all $\varepsilon\in(0,\varepsilon^\ast]$. \hfill $\square$
\end{theorem}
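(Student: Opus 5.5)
The plan is to bound $d_{\mathrm{BL}}$ uniformly in the initial point. We first do this over a single window of length comparable to $T$, and then extend to all $k\ge T/\varepsilon$ using the Markov property and convexity. There are three ingredients: (i) uniform-in-$x$ mixing of the limiting SDE \eqref{eq:sde} from Dirac initial data; (ii) the trajectory comparison of Theorem~\ref{thm:scot} on a finite window; (iii) a Chapman--Kolmogorov splitting that turns the finite-window estimate into an infinite-horizon one.

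\emph{Step 1 (uniform mixing of the diffusion).} Lemma~\ref{lem:conv_sde_FK} requires a smooth initial density, so we first regularize. By Theorem~\ref{thm:generator}(a), $\mathcal L$ is a uniformly elliptic operator with smooth coefficients on the compact manifold $\M$. Hence $\bar P_1(x,\mathrm dy)=p_1(x,y)\,\volg(\mathrm dy)$, with a heat kernel $p_1$ that is smooth and jointly continuous, and therefore bounded on $\M\times\M$. It follows that
$$\sup_{x\in\M}\|p_1(x,\cdot)-\rho_\kappa\|_{L^2(\rho_\kappa^{-1}\volg)}=:C_1<\infty.$$
We apply Lemma~\ref{lem:conv_sde_FK} (equivalently Lemma~\ref{lem:convergence}(a)) with $\rho_0=p_1(x,\cdot)$ and use the Markov property at time $1$. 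This gives
$$\sup_x\|\bar P_t(x,\cdot)-\mu_\kappa\|_{\mathrm{TV}}\le \tfrac12 C_1 e^{-\lambda_\kappa(t-1)}.$$
Since $d_{\mathrm{BL}}\le 2\|\cdot\|_{\mathrm{TV}}$, we can choose $T\ge1$ such that $\sup_x d_{\mathrm{BL}}(\bar P_t(x,\cdot),\mu_\kappa)\le\delta/2$ for all $t\ge T$.

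\emph{Step 2 (finite window comparison).} Let $f$ satisfy $\|f\|_\infty\le1$ and $\mathrm{Lip}(f)\le1$. Since $\dist\le C_{\M}|\cdot|$ on the compact $\M$, driving both processes with the same $W$ gives
$$|\mathbb E f(x^\varepsilon_t(y))-\mathbb E f(\bar x_t(y))|\le C_{\M}\big(\mathbb E\sup_{s\le 2T}|x^\varepsilon_s(y)-\bar x_s(y)|^2\big)^{1/2}.$$
We apply Theorem~\ref{thm:scot} on the horizon $2T$ with tolerance $\delta^2/(4C_{\M}^2)$. This yields $\varepsilon^*$ such that for $\varepsilon\le\varepsilon^*$, every $y\in\M$ and every $t\in[0,2T]$,
$$d_{\mathrm{BL}}(P^{j}_\varepsilon(y,\cdot),\bar P_{j\varepsilon}(y,\cdot))\le\delta/2 \quad\text{whenever } j\varepsilon=t.$$
We also shrink $\varepsilon^*\le T$. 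Then for any $j$ with $j\varepsilon\in[T,2T]$, combining with Step 1 gives
$$\sup_y d_{\mathrm{BL}}(P^j_\varepsilon(y,\cdot),\mu_\kappa)\le\delta.$$

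\emph{Step 3 (infinite horizon and invariant measures).} Take any $k\ge T/\varepsilon$. Because $\varepsilon\le T$, we can write $k=m+j$ with $m\ge0$ and $j\varepsilon\in[T,2T]$: take $j=k$ if $k\varepsilon\le 2T$, and $j=\lceil T/\varepsilon\rceil$ otherwise. By Chapman--Kolmogorov,
$$P^k_\varepsilon(x,\cdot)=\int P^m_\varepsilon(x,\mathrm dy)\,P^j_\varepsilon(y,\cdot).$$
For each admissible $f$,
$$\Big|\int f\,\mathrm dP^k_\varepsilon(x,\cdot)-\int f\,\mathrm d\mu_\kappa\Big|\le\int P^m_\varepsilon(x,\mathrm dy)\,d_{\mathrm{BL}}(P^j_\varepsilon(y,\cdot),\mu_\kappa)\le\delta,$$
and taking the supremum over $f$ gives the claim. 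For an invariant $\mu_\varepsilon$ we have $\mu_\varepsilon=\mu_\varepsilon P^k_\varepsilon$, so the same convexity argument, now integrating against $\mu_\varepsilon$, gives $d_{\mathrm{BL}}(\mu_\varepsilon,\mu_\kappa)\le\delta$.

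The main obstacle is Step 1. Lemma~\ref{lem:conv_sde_FK} is stated only for smooth initial densities, so we need uniformity over Dirac initial data. The plan is to rely on ellipticity to obtain a continuous, strictly positive, bounded transition density at time $1$, as done above. Should citing heat-kernel regularity prove delicate, the fallback is a Doeblin-type argument instead. Uniform ellipticity on compact $\M$ gives a minorization $\bar P_1(x,\cdot)\ge c\,\volg$, which yields uniform geometric ergodicity in total variation directly.
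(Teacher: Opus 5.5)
Your proposal is correct and follows essentially the same route as the paper's proof. It restarts \eqref{eq:sde} from the time-$1$ transition density $p_1(x,\cdot)$ and applies Lemma~\ref{lem:convergence}(a) to get uniform mixing, compares on a single window via Theorem~\ref{thm:scot} and Jensen's inequality, and extends the bound to all $k\ge T/\varepsilon$ and to invariant measures by Chapman--Kolmogorov averaging; your explicit choices $T\ge1$, $\varepsilon^\ast\le T$ and the constant $C_{\M}$ relating geodesic and ambient distances also make precise small points the paper leaves implicit (e.g., $t_\varepsilon\in[T,T+1]$ tacitly requiring $\varepsilon\le1$).
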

Theorem \ref{thm:lawconv} establishes uniform convergence of the transition probabilities
of the approximating dynamics toward the Gibbs measure $\mu_\kappa$ in the
bounded-Lipschitz metric. We next translate this measure-theoretic
approximation into explicit probabilistic bounds relative to the set
$S_\star$ of global minimizers. In particular, the following proposition provides lower bounds on the probability of being in any neighborhood of $S_\star$,
as well as upper bounds on the probability of being in closed sets separated
from $S_\star$.
\begin{proposition}\label{prop:practical}
In the setting of Theorem~\ref{thm:lawconv}, for all $\varepsilon\in(0,\varepsilon^\ast]$ and all integers
$k\ge T\varepsilon^{-1}$, the following holds:
\begin{enumerate}
\item If $U\subsetneq\M$ is an open neighborhood of $S_\star$ and
$\eta_U:=\min\{1,\tfrac12\dist(S_\star,\M\setminus U)\}$, then
$$
\inf_{x\in\M}\ P^{k}_\varepsilon\big(x,U\big)\ \ge\ 1-C_U\,e^{-2\kappa c_U}-\delta\,\eta_U^{-1};
$$
\item If $\emptyset\ne K\subseteq\M$ is closed with $K\cap S_\star=\emptyset$ and
$\eta_K:=\min\{1,\tfrac12\dist(K,S_\star)\}$, then
$$
\sup_{x\in\M}\ P^{k}_\varepsilon\big(x,K\big)\ \le\ C_K\,e^{-2\kappa c_K}+\delta\,\eta_K^{-1},
$$
\end{enumerate}
where $C_U,c_U>0$ are the constants of \eqref{eq:conc} associated with the closed set
$\M\setminus S_\star^{\eta_U}$, and $C_K,c_K>0$ are those associated with $\overline{K^{\eta_K}}$. In particular, they depend only on $U$, respectively $K$, and are independent of $\kappa$, $\varepsilon$, $k$, and $x$. \hfill $\square$
\end{proposition}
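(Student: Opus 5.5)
The plan is to compare indicator functions with Lipschitz test functions and then apply the bounded-Lipschitz estimate of Theorem~\ref{thm:lawconv}. Lemma~\ref{lem:conc} supplies the Gibbs-mass bounds.

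\textbf{Part 2.} Let $K$ be closed with $K\cap S_\star=\emptyset$, and set $\eta=\eta_K$. Define the cutoff
$$h(y):=\max\{0,\,1-\dist(y,K)/\eta\}.$$
It satisfies $\mathbf 1_K\le h\le \mathbf 1_{K^{\eta}}$, $\|h\|_\infty\le1$ and $\mathrm{Lip}(h)\le\eta^{-1}$. Because $\eta\le1$, the rescaled function $\eta h$ has sup norm at most $\eta\le1$ and Lipschitz constant at most $1$, so it is admissible in \eqref{dbldef}. Theorem~\ref{thm:lawconv} then gives, for all $x\in\M$,
$$\Big|\int \eta h\,\mathrm d\big(P^k_\varepsilon(x,\cdot)-\mu_\kappa\big)\Big|\le\delta,$$
that is, $\int h\,\mathrm dP^k_\varepsilon(x,\cdot)\le \int h\,\mathrm d\mu_\kappa+\delta\eta^{-1}$. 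This yields the chain
$$P^k_\varepsilon(x,K)\le \int h\,\mathrm dP^k_\varepsilon(x,\cdot)\le \mu_\kappa(K^{\eta})+\delta\eta^{-1}\le\mu_\kappa\big(\overline{K^{\eta}}\big)+\delta\eta^{-1}.$$
The set $\overline{K^\eta}$ is closed. Every point of it lies within distance $\eta\le\tfrac12\dist(K,S_\star)$ of $K$, so by the triangle inequality it is at distance at least $\tfrac12\dist(K,S_\star)>0$ from $S_\star$. Lemma~\ref{lem:conc} therefore applies and gives $\mu_\kappa(\overline{K^\eta})\le C_Ke^{-2\kappa c_K}$. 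Taking the supremum over $x$ proves item 2.

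\textbf{Part 1.} Set $\eta=\eta_U$ and let $K:=\M\setminus U$, which is closed, nonempty because $U\ne\M$, and disjoint from $S_\star$. Use the cutoff
$$h(y):=\max\{0,\,1-\dist(y,S_\star^{\eta})/\eta\}.$$
Then $h=1$ on $S_\star^\eta$, and $h$ vanishes outside $S_\star^{2\eta}$. Since $2\eta\le\dist(S_\star,\M\setminus U)$, we have $S_\star^{2\eta}\subseteq U$, so $h\le\mathbf 1_U$. As before, $\eta h$ is admissible in \eqref{dbldef}, and Theorem~\ref{thm:lawconv} gives
$$P^k_\varepsilon(x,U)\ge\int h\,\mathrm dP^k_\varepsilon(x,\cdot)\ge \mu_\kappa(S_\star^{\eta})-\delta\eta^{-1}=1-\mu_\kappa\big(\M\setminus S_\star^{\eta}\big)-\delta\eta^{-1}.$$
The set $\M\setminus S_\star^\eta$ is closed and at distance at least $\eta>0$ from $S_\star$, so Lemma~\ref{lem:conc} bounds its mass by $C_Ue^{-2\kappa c_U}$. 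Taking the infimum over $x$ proves item 1.

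The constants in both parts come from Lemma~\ref{lem:conc} applied to sets determined by $U$ or $K$ alone, so they are independent of $\kappa,\varepsilon,k,x$. This independence from $\kappa$ relies on the constants of Lemma~\ref{lem:conc} being uniform in $\kappa$, as that lemma states.

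The only delicate point is the rescaling needed for $d_{\mathrm{BL}}$, which requires both $\|\cdot\|_\infty\le1$ and $\mathrm{Lip}\le1$. This is exactly why $\eta$ is capped at $1$, and it is the source of the factor $\delta\eta^{-1}$.
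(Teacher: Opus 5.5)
Your proof is correct and follows essentially the same route as the paper. The paper first proves the sandwich bounds $\inf_x P^k_\varepsilon(x,A^\eta)\ge\mu_\kappa(A)-\delta\eta^{-1}$ and $\sup_x P^k_\varepsilon(x,A)\le\mu_\kappa(A^\eta)+\delta\eta^{-1}$ for a general Borel set $A$, using the same cutoff $\max\{0,1-\eta^{-1}\dist(\cdot,A)\}$ rescaled by $\eta$, then specializes to $A=S_\star^{\eta_U}$ and $A=K$ and closes with Lemma~\ref{lem:conc} on $\M\setminus S_\star^{\eta_U}$ and $\overline{K^{\eta_K}}$; you run the same argument directly in each part instead of factoring it out as a general claim.
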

%

    %
%\end{example}

\section{PROOFS}
\vspace{0.1cm}\noindent 
\textbf{Proof of Theorem 2:} To simplify notation, let $\phi_j = F_j \circ\phi$. Utilizing the properties of the Lie Bracket, we compute
\begin{align}
    [\phi_1 f_i,\phi_2 f_i] = (\phi_1 f_i(\phi_2) - \phi_2 f_i(\phi_1)) f_i.
\end{align}
Consequently, the limiting SDE \eqref{eq:sde} simplifies to
 \begin{align}\label{eq:SLBA-limit-sde-simplified}
     \mathrm{d}\bar{x}_t&=f_0(\bar{x})\,\mathrm{d}t + \frac{1}{\sqrt{\kappa}}\sum_{i=1}^r\sum_{j=1}^2 \phi_j f_i(\bar{x})\bullet dW_t^{2i+j-2} ,
 \end{align}
where the vector field $f_0$ is defined by
\begin{align}\label{eq:drift_defn_1}
    f_0&:= \bar{f}_0+\sum_{i=1}^r(\phi_1 f_i(\phi_2) - \phi_2 f_i(\phi_1))f_i.
\end{align}
The associated infinitesmal generator $\mathcal{L}$ is defined by
\begin{align}
    \mathcal{L}\psi = f_0(\psi) + \frac{1}{2\kappa}\sum_{i=1}^r\sum_{j=1}^2\phi_j f_i(\phi_j f_i(\psi)),
\end{align}
for all $\psi\in C^\infty(\mathcal{M})$.
Using the product rule, we compute that
\begin{align}
    \phi_j f_i(\phi_j f_i(\psi)) = \phi_j f_i (\phi_j) f_i(\psi) +  \phi_j^2 f_i(f_i(\psi)).
\end{align}
Therefore, for any $i\in\{1,\dots,r\}$, we have that
\begin{align}\label{eq:diff_defn_1}
    \begin{aligned}
        \sum_{j=1}^2\phi_j f_i(\phi_j f_i(\psi)) &= (\phi_1^2+ \phi_2^2) f_i(f_i(\psi))\\
        &+ (\phi_1 f_i (\phi_1)+
    \phi_2 f_i (\phi_2))f_i(\psi).
    \end{aligned}
\end{align}
From the chain rule and the definition of $F_j$, we compute that $f_i(\phi_1) = \phi_2 f_i(\phi)$, $f_i(\phi_2)= -\phi_1f_i(\phi)$, $\phi_1^2 + \phi_2^2 = 1$. Substituting into \eqref{eq:drift_defn_1} and \eqref{eq:diff_defn_1}, we obtain that
\begin{align*}
    f_0&= \bar{f}_0-\sum_{i=1}^r f_i(\phi)f_i, & \sum_{j=1}^2\phi_j f_i(\phi_j f_i(\psi))&= f_i(f_i(\psi)).
\end{align*}
Substituting into the infinitesmal generator, we obtain that $\mathcal{L}\psi= f_0(\psi) + \frac{1}{2\kappa}\sum_{i=1}^r f_i(f_i (\psi))$. If $\nabla$ is any affine connection on $\mathcal{M}$, then, using \cite[Proposition 4.15]{lee2018introduction}, we compute that
\begin{align}
    f_i(f_i (\psi)) = (\nabla_{f_i}d\psi)(f_i) + \nabla_{f_i}f_i (\psi), 
\end{align}
In particular, if $\nabla$ is the Levi-Civita connection associated with the metric \eqref{eq:metric}, then
\begin{align}
    \sum_{i=1}^r f_i(f_i(\psi)) &= \Delta  \psi + \sum_{i=1}^r \nabla_{f_i} f_i(\psi).
\end{align}
Therefore, the generator $\mathcal{L}$ acts on a smooth function $\psi$ as
\begin{align}
    \mathcal{L}\psi = f_0(\psi) + \frac{1}{2\kappa} \sum_{i=1}^r \nabla_{f_i} f_i(\psi) + \frac{1}{2\kappa}\Delta \psi,
\end{align}
so that, if the vector field $\bar{f}_0$ is taken to be
\begin{align}
    \bar{f}_0&= -\frac{1}{2\kappa} \sum_{i=1}^r \nabla_{f_i} f_i,
\end{align}
then the generator $\mathcal{L}$ is given by
\begin{align}
    \mathcal{L}\psi = -\langle\Grad\phi, \Grad\psi\rangle+ \frac{1}{2\kappa}\Delta \psi,
\end{align}
for any function $\psi\in C^\infty(\M)$. Due to compactness of $\mathcal{M}$ and the fact that the operator $\mathcal{L}$ is uniformly elliptic for any $\kappa > 0$, it follows that its adjoint $\mathcal{L}^*$, defined by $\mathcal{L}^* \psi = \Div(\psi \Grad\phi) + \frac{1}{2\kappa}\Delta  \psi$, has a unique invariant density. Indeed, the Fokker-Planck equation corresponding to \eqref{eq:SLBA-limit-sde-simplified} is
\begin{align*}
    \partial_t \rho &= \mathcal{L}^*\rho = \Div(\rho \Grad\phi) + \frac{1}{2\kappa}\Delta  \rho \\
    &= \frac{1}{2\kappa}\Div\left(\mathrm{e}^{-2\kappa \phi}\Grad\left(\mathrm{e}^{2\kappa \phi}\rho\right)\right).
\end{align*}
From the the last equality, we observe that the density $\rho_\kappa = c_\kappa^{-1}\,\mathrm{e}^{-2\kappa \phi}$,  $c_{\kappa}:=\int_{\mathcal{M}} \mathrm{e}^{-2\kappa \phi}\$ \volg$, where $\volg$ is the Riemannian density, is a stationary solution of the Fokker-Planck operator, and, therefore, $\mu_\kappa:= \rho_{\kappa} \,\volg$ is the unique invariant measure for \eqref{eq:sde}.  \hfill $\blacksquare$

\vspace{0.1cm}\noindent 
\textbf{Proof of Lemma \ref{lem:conc}:} Fix any open $U\supset S_\star$ and define the set $K:=\mathcal M\setminus U$, which is compact and disjoint from $S_\star$. By continuity of $\phi$ and compactness, the minimum $m_K:=\min_{\bar{x}\in K}\phi(\bar{x})$, exists and satisfies $m_K>\phi_\star$. Choose $\epsilon\in(0,m_K-\phi_\star)$ and define the nonempty closed set $N:=\{x\in\mathcal M:\ \phi(x)\le \phi_\star+\epsilon\}$. Then, we have that
\begin{align*}
\int_K e^{-2\kappa\phi}\,\volg \;\le\; e^{-2\kappa m_K}\,\volg(K),\\
\int_N e^{-2\kappa\phi}\,\volg \;\ge\; e^{-2\kappa(\phi_*+\epsilon)}\,\volg(N).
\end{align*}
Therefore, because $c_\kappa\ge \int_N e^{-2\kappa\phi}\,\volg$, we obtain that
\begin{align*}
\mu_\kappa(K)\le
\frac{e^{-2\kappa m_K}\,\volg(K)}{e^{-2\kappa(\phi_*+\varepsilon)}\,\volg(N)}
= C_K\,e^{-2\kappa c_K},
\end{align*}
where $C_K:=\mathrm{Vol}(K)\mathrm{Vol}(N)^{-1}$ and $c_K:=(m_K-\phi_*-\epsilon)>0$ are independent of $\kappa$.
Because $c_K>0$, the right-hand side of the inequality \eqref{eq:conc} decays exponentially as $\kappa\nearrow\infty$, hence $\mu_\kappa(K)\searrow 0$ and, therefore, $\mu_\kappa(U)\nearrow 1$. \hfill $\blacksquare$

\vspace{0.1cm}\noindent 
\textbf{Proof of Lemma \ref{lem:convergence}:} For any fixed $\kappa>0$, the operator $\mathcal{L}$ is elliptic with a spectral gap $\lambda_\kappa >0$. Consequently, the Markov semigroup $\mathrm{e}^{t\mathcal{L}}$ satisfies
    \begin{align*}
        \|\mathrm{e}^{t\mathcal{L}}f\|_{L^2(\mu_\kappa)}\leq \mathrm{e}^{-\lambda_\kappa t}\|f\|_{L^2(\mu_\kappa)},
    \end{align*}
    for any $f\in L^2(\mu_\kappa)$ with $\int_{\mathcal{M}} f \,\mu_\kappa = 0$. To proceed, define the function $u_t:= \rho_t\rho_\kappa^{-1}-1$,  so that $\rho_t=\rho_\kappa(u_t+1)$. Since, by definition, $\mathcal L^*\rho_\kappa=0$, and $\mathcal L^*(\rho_\kappa v)=\rho_\kappa\mathcal Lv$ for any smooth $v$, by self-adjointness of $\mathcal L$ in $L^2(\mu_\kappa)$, we obtain that
    $$
    \rho_\kappa\,\partial_t u_t=\partial_t\rho_t=\mathcal L^*\rho_t=\mathcal L^*(\rho_\kappa u_t)
    =\rho_\kappa\,\mathcal Lu_t,
    $$
    and hence $\partial_t u_t=\mathcal Lu_t$.
    Moreover, direct computation gives 
    $$\int_{\mathcal{M}} u_t \,\mu_\kappa = \int_{\mathcal{M}} (\rho_t-\rho_\kappa)\,\volg = 0,$$
    and, consequently, we obtain that $\|u_t\|_{L^2(\mu_\kappa)}\leq \mathrm{e}^{-\lambda_\kappa t }\|u_0\|_{L^2(\mu_\kappa)}$, for all $t\geq 0$. From the Cauchy-Schwartz inequality and the positivity of $\rho_{\kappa}$ for any $\kappa > 0$, we obtain that
    \begin{align*}
        \|\rho_t&-\rho_\kappa\|_{L^1(\volg)} = \int_{\mathcal{M}}|\rho_t-\rho_\kappa|\rho_\kappa^{-\frac{1}{2}}\rho_\kappa^{\frac{1}{2}}\,\volg \\
        &\leq \left(\int_{\mathcal{M}}(\rho_t-\rho_\kappa)^2\rho_\kappa^{-1}\,\volg\right)^{\frac{1}{2}}\left(\int_{\mathcal{M}}\rho_\kappa\,\volg\right)^{\frac{1}{2}}\\
        &= \left(\int_{\mathcal{M}}u_t^2\,\rho_\kappa\,\volg\right)^{\frac{1}{2}} = \|u_t\|_{L^2(\mu_\kappa)}\leq \mathrm{e}^{-\lambda_\kappa t }\|u_0\|_{L^2(\mu_\kappa)}.
    \end{align*}
    which proves the first claim. To show the second claim, we note that the operator $\mathcal{L}$ belongs to the class of operators considered in \cite[Section 8.2]{kolokoltsov2007semiclassical}. Therefore, it follows from \cite[Proposition 2.1]{kolokoltsov2007semiclassical}, in conjunction with the assumption that $\phi$ is a Morse function with a unique local minimum, that the spectral gap of $\mathcal{L}$ is uniformly bounded below for all $\kappa > 0$ by a strictly positive constant $\lambda>0$. \hfill $\blacksquare$

    \vspace{0.1cm}\noindent 
\textbf{Proof of Lemma \ref{lem:conv_sde_FK}:} By definition, the total variation distance between the probability laws $\mathbb{P}$ and $\mathbb{Q}$ is
    \begin{align}
        \sup_{A ~\text{measurable}} |\mathbb{P}(A)-\mathbb{Q}(A)|.
    \end{align}
    When the laws $\mathbb{P}$ and $\mathbb{Q}$ have densities $p$ and $q$, 
    it follows that
    \begin{align}
        \sup_{A ~\text{measurable}} |\mathbb{P}(A)-\mathbb{Q}(A)| = \frac{1}{2}\|p-q\|_{L^1(\volg)}.
    \end{align}
    Substituting for $\mathbb{P}$ with the law of the random variable $\bar{x}_t$ and for $\mathbb{Q}$ with the invariant measure $\mu_\kappa$, we obtain that
    \begin{align}
        |\mathbb{P}\{\bar{x}_t\in U\}-\mu_\kappa(U)| \leq \frac{1}{2}\|\rho_t-\rho_\kappa\|_{L^1(\volg)}, 
    \end{align}
    where $\rho_t$ solve the Fokker-Planck equation from the initial condition $\rho_0$. Hence, we obtain that
    \begin{align}
        |\mathbb{P}\{\bar{x}_t\in U\}-\mu_\kappa(U)| \leq \frac{1}{2}\|\rho_0-\rho_\kappa\|_{L^2(\rho_{\kappa}^{-1}\volg)}\mathrm{e}^{-\lambda_\kappa t},
    \end{align}
    for any measurable $U\subset\mathcal{M}$, any $\kappa>0$, and any $t\geq 0$. If, in addition, $\rho_0$ is the uniform probability density, then
    \begin{align*}
        \|\rho_0-\rho_\kappa&\|_{L^2(\rho_{\kappa}^{-1}\volg)}^2 = \int_{\mathcal{M}}(\rho_0-\rho_\kappa)^2\rho_\kappa^{-1}\volg\\
        &=\int_{\mathcal{M}} \rho_0^2 \,\rho_\kappa^{-1}\volg -2\int_{\mathcal{M}}\rho_0\,\volg + \int_{\mathcal{M}}\rho_\kappa\,\volg\\
        & = \frac{1}{\mathrm{Vol}(\mathcal{M})^2}\int_{\mathcal{M}} \rho_\kappa^{-1}\volg -1,
    \end{align*}
    where we used the fact that $\rho_0=\mathrm{Vol}(\mathcal{M})^{-1}$, $\int_{\mathcal{M}}\rho_\kappa\,\volg = 1$, and $\int_{\mathcal{M}}\rho_0\,\volg =1$. Recalling the form of the invariant measure, we see that
    \begin{align}
        \rho_\kappa (x) \geq \frac{\mathrm{e}^{-2\kappa \max_{x\in\mathcal{M}}\phi(x)}}{\mathrm{e}^{-2\kappa \min_{x\in\mathcal{M}}\phi(x)}\int_{\mathcal{M}} \,\volg}\geq \frac{\mathrm{e}^{-2\gamma_\phi\kappa}}{\mathrm{Vol}(\mathcal{M})},
    \end{align}
    for all $x\in\mathcal{M}$. Therefore, we obtain the bound
    \begin{align*}
        \|\rho_0-\rho_\kappa\|_{L^2(\rho_{\kappa}^{-1}\volg)}^2 &\leq\frac{\mathrm{e}^{2\gamma_\phi\kappa}}{\mathrm{Vol}(\mathcal{M})}\int_{\mathcal{M}}\frac{1}{\rho_\kappa}\volg -1 \\
        &= \mathrm{e}^{2\gamma_\phi\kappa}-1\leq\mathrm{e}^{2\gamma_\phi\kappa},
    \end{align*}
    from which the second inequality follows. \hfill $\blacksquare$

\vspace{0.1cm}\noindent 
\textbf{Proof of Proposition \ref{prop:finite_horizon}:} Using the triangle inequality, we have
    \begin{align*}
         |&\mathbb{P}\{x^\varepsilon_t\in U\}-\mu_{\kappa}(U)| \\
         &\leq  |\mathbb{P}\{x^\varepsilon_t\in U\}-\mathbb{P}\{\bar{x}_t\in U\}| +  |\mathbb{P}\{\bar{x}_t\in U\}-\mu_{\kappa}(U)| \\
         &\leq |\mathbb{P}\{x^\varepsilon_t\in U\}-\mathbb{P}\{\bar{x}_t\in U\}|+ \frac{1}{2}\|\rho_0-\rho_\kappa\|_{L^2(\rho_{\kappa}^{-1}\volg)}\mathrm{e}^{-\lambda_\kappa t}
    \end{align*}
    where we use Corollarly \ref{lem:conv_sde_FK}. Since $\rho_0\in C^\infty(\mathcal{M})$, the law of $\bar{x}_t$ admits a density for all $t\geq 0$ and, thus, any open set $U\subset \mathcal{M}$ with a smooth boundary is a continuity set of the law of $\bar{x}_t$. Consequently, from Theorem \ref{thm:scot}, we obtain that there exists $\varepsilon^*>0$ such that, for all $\varepsilon\in(0,\varepsilon^*)$ and all $t\in[0,T]$,
    \begin{align}
        |\mathbb{P}\{x^\varepsilon_t\in U\}-\mathbb{P}\{\bar{x}_t\in U\}| < \epsilon,
    \end{align}
    which concludes the proof. \hfill $\blacksquare$   

\vspace{0.1cm}\noindent
\textbf{Proof of Lemma \ref{lem:markov}:} On $[k\varepsilon,(k+1)\varepsilon)$, differentiating \eqref{eq:B} in $t$ eliminates the additive constant $W^\ell_{k\varepsilon}$, so $b^\ell_\varepsilon(t,W)$ depends on $W$ only through the increments ${w}_k$ which, by definition, are i.i.d samples from the Gaussian distribution $\mathcal{N}(0,\varepsilon I)$. Therefore, for any fixed $w\in\mathbb{R}^{2r}$, if we define $\Psi_\varepsilon(x,{w})$ as the time-$1$ solution of the ODE
\begin{equation}\label{eq:onestep}
y'(s)=\varepsilon\bar f_0(y)+\tfrac{1}{\sqrt\kappa}\sum_{i=1}^r\sum_{j=1}^2 g_{i,j}(y)\,{w}^{2i+j-2}\,(U^{\iota_i^j({w})}_\kappa)'(s),
\end{equation}
from the initial condition $y(0)=x$, where the vector fields $g_{i,j}$ are defined by $g_{i,j}:=F_j\circ\phi\,f_i$, we see that $\Psi_\varepsilon(x,{w})$ is independent of $k$, and, therefore, the sampled process $X_k$ is a time-homogeneous Markov process, where the Markov property follows from the independence of the increments $w_k$. To complete the proof, it remains to show that $P_\varepsilon$ maps $C(\M)$ into $C(\M)$. Since ${w}^{2i-1}{w}^{2i}=0$ if and only if one of the two coordinates vanishes, the set $
\mathcal{D}_0:=\bigcup_i^{r}\{{w}\in\mathbb{R}^{2r} : {w}^{2i-1}{w}^{2i}=0\}
$ coincides with the union of the coordinate hyperplanes $\{{w}\in\mathbb{R}^{2r}~|~{w}^\ell=0\}$. In particular, $\mathcal{D}_0$ is closed and $\mathrm{Leb}(\mathcal{D}_0) = 0$, which implies that it also has zero measure with respect to the Gaussian measure $\mathcal{N}(0,\varepsilon I)$ since the latter is absolutely continuous with respect to the former. Its complement is the disjoint union of the $4^r$ open orthants of $\mathbb{R}^{2r}$. On each orthant, the sign pattern of ${w}$, and therefore the branch $\iota_i^j({w})$ of every pair in \eqref{eq:onestep} is constant. Consequently, for ${w}$ ranging over a fixed orthant, the right-hand side of \eqref{eq:onestep} is a single smooth function of $(y,{w},s)$. By smooth dependence of ODE solutions on initial conditions and parameters, the map $\Psi_\varepsilon$ is jointly smooth on $\M\times O$ for each orthant $O\subset\mathbb{R}^{2r}$. Borel measurability of $\Psi_\varepsilon$ on all of $\M\times\mathbb{R}^{2r}$ follows, since its domain partitions into finitely many Borel sets, on each of which it is continuous.
 
Nevertheless, continuity of $\Psi_\varepsilon(x,\cdot)$ on all of $\mathbb{R}^{2r}$ fails. Indeed, the map $\Psi_\varepsilon(x,\cdot)$ has a genuine discontinuity across the set $\mathcal{D}_0$. However, because the discontinuity set $\mathcal{D}_0$ has zero measure, we are allowed to proceed as follows: 
Fix a function $\psi\in C(\M)$, a sequence $x_n\to x$ in $\M$, and define 
\begin{align*}
h_n({w})&:=\psi(\Psi_\varepsilon(x_n,{w})), & h({w})&:=\psi(\Psi_\varepsilon(x,{w})).
\end{align*} 
For every ${w}\notin\mathcal{D}_0$, the point ${w}$ lies in some orthant $O$, and $(x_n,{w})\to(x,{w})$ within $\M\times O$, on which $\Psi_\varepsilon$ is continuous. Hence, $h_n({w})\to h({w})$, and $h_n\to h$ pointwise $\mathcal{N}(0,\varepsilon I)$-almost everywhere. Since, by definition, $|h_n|\le\|\psi\|_\infty$, the Dominated Convergence theorem \cite[Theorem 3.3.1]{axler2020measure} yields
$$
P_\varepsilon\psi(x_n)=\int h_n\,\mathrm{d}\gamma_\varepsilon\to\int h\,\mathrm{d}\gamma_\varepsilon=P_\varepsilon\psi(x),
$$
i.e., $P_\varepsilon\psi$ is sequentially continuous whenever $\psi\in C(\M)$. Because $\M$ is metrizable, sequential continuity implies continuity, and we have $P_\varepsilon\psi\in C(\M)$. \hfill $\blacksquare$

\vspace{0.1cm}\noindent 
\textbf{Proof of Proposition \ref{prop:existence}:} From \cite[Prop.~6.1.1]{meyn2012markov}, the chain $(X_k)$ is weak Feller since its kernel $P_\varepsilon$ satisfies $P_\varepsilon\psi\in C(\M)$ for any $\psi\in C(\M)$.  From \cite[Prop.~12.0.1]{meyn2012markov}, it follows that the chain $(X_k)$ admits at least one invariant measure. \hfill $\blacksquare$

\vspace{0.1cm}\noindent 
\textbf{Proof of Theorem \ref{thm:lawconv}:} Since \eqref{eq:sde} is uniformly
elliptic with smooth coefficients on the compact $\M$, its transition kernel admits a density $p_t(x,y)$,
jointly smooth and strictly positive for $t>0$. Moreover, for any $\kappa>0$, the corresponding operator $\mathcal{L}$ has a strictly positive spectral gap $\lambda_\kappa>0$. Define the constant
$$
C_\kappa:=\sup_{x\in\M}\|p_1(x,\cdot)-\rho_\kappa\|_{L^2(\rho_\kappa^{-1}\volg)},
$$
which is finite by joint continuity
of $p_1$ on $\M\times\M$ and the positivity of $\rho_\kappa$. For $t\ge1$, the Markov property identifies the
density $\rho^x_t$ of $\bar x_t(x)$ with the solution of the Fokker--Planck equation at time $t-1$ from the
initial density $p_1(x,\cdot)\in C^\infty(\M)$, so Proposition~\ref{lem:convergence} yields
$$
\|\rho^x_t-\rho_\kappa\|_{L^1(\volg)}\le C_\kappa e^{-\lambda_\kappa(t-1)},
$$ 
uniformly in $x$. Let
$T:=1+\lambda_\kappa^{-1}\log(2C_\kappa\delta^{-1})$. Then, for every $t\ge T$ and every $f$ with $\|f\|_\infty\le1$,
$$
\sup_{x\in\M}\big|\mathbb Ef(\bar x_t(x))-\mu_\kappa(f)\big|
\ \le\ \sup_{x\in\M}\|\rho^x_t-\rho_\kappa\|_{L^1(\volg)}\ \le\ \frac{\delta}{2}.
$$
Next, let
$\varepsilon^\ast:=\varepsilon^\ast\big((\frac{\delta}{2})^2,\,T+1\big)$ be given by Theorem \ref{thm:scot}, fix $\varepsilon\in(0,\varepsilon^\ast]$, and
set $k_\varepsilon:=\lceil T\varepsilon^{-1}\rceil$, so that $t_\varepsilon:=k_\varepsilon\varepsilon\in[T,T+1]$
and $P^{k_\varepsilon}_\varepsilon(x,\cdot)$ is the law of $x^\varepsilon_{t_\varepsilon}(x)$. Then, the convergence of trajectories property \eqref{eq:wz} along with Jensen's
inequality give
$$
\big|P^{k_\varepsilon}_\varepsilon f(x)-\mathbb Ef(\bar x_{t_\varepsilon}(x))\big|
\ \le\ \mathbb E\big|x^\varepsilon_{t_\varepsilon}(x)-\bar x_{t_\varepsilon}(x)\big| \le \frac{\delta}{2},
$$
for $f$ with $\|f\|_\infty\le1$, $\mathrm{Lip}(f)\le1$, uniformly in $x$, so that 
\begin{equation}\label{eq:dbl_bound}
    \sup_x d_{\mathrm{BL}}\big(P^{k_\varepsilon}_\varepsilon(x,\cdot),\mu_\kappa\big)\le\delta
\end{equation}
Next, let $k\in\mathbb N$ with $k\ge T\varepsilon^{-1}$, so that $k\ge k_\varepsilon$. Because
$P^{k-k_\varepsilon}_\varepsilon(x,\cdot)$ is a probability measure, the Chapman--Kolmogorov relation allows us to write
\begin{align*}
    P^{k}_\varepsilon f(x)-\mu_\kappa(f)
    =\int_\M\big(P^{k_\varepsilon}_\varepsilon f(y)-\mu_\kappa(f)\big)\,P^{k-k_\varepsilon}_\varepsilon(x,\mathrm dy),
\end{align*}
and, since the bound \eqref{eq:dbl_bound} is uniform in the base point, the right-hand side is bounded in
absolute value by $\delta$. Taking the supremum over all $f$ with $\|f\|_\infty\le1$ and $\mathrm{Lip}(f)\le1$
yields the assertion. Finally, if $\mu_\varepsilon P_\varepsilon=\mu_\varepsilon$, then
$\mu_\varepsilon=\mu_\varepsilon P^{k_\varepsilon}_\varepsilon$, so that
\begin{align*}
    \mu_\varepsilon(f)-\mu_\kappa(f)
    =\int_\M\big(P^{k_\varepsilon}_\varepsilon f(x)-\mu_\kappa(f)\big)\,\mu_\varepsilon(\mathrm dx),
\end{align*}
for any such $f$, which is again bounded by $\delta$ in absolute value, and, therefore,
$d_{\mathrm{BL}}(\mu_\varepsilon,\mu_\kappa)\le\delta$. \hfill $\blacksquare$

\vspace{0.1cm}\noindent 
\textbf{Proof of Proposition \ref{prop:practical}:} We first claim that, for every Borel set $A\subseteq\M$ and every $\eta\in(0,1]$, we have that
\begin{align}
    \inf_{x\in\M}P^{k}_\varepsilon\big(x,A^\eta\big)&\ \ge\ \mu_\kappa(A)-\delta\,\eta^{-1},\label{eq:claim_lower}\\
    \sup_{x\in\M}P^{k}_\varepsilon\big(x,A\big)&\ \le\ \mu_\kappa(A^\eta)+\delta\,\eta^{-1}.\label{eq:claim_upper}
\end{align}
Indeed, the function $f:=\max\{0,\,1-\eta^{-1}\dist(\cdot,A)\}$ satisfies
$$
\mathbf 1_{A}\ \le\ f\ \le\ \mathbf 1_{A^\eta},\qquad \|f\|_\infty\le1,\qquad \mathrm{Lip}(f)\le\eta^{-1}.
$$
Since $\eta\le1$, the rescaled function $\eta f$ satisfies $\|\eta f\|_\infty\le1$ and
$\mathrm{Lip}(\eta f)\le1$, so applying Theorem~\ref{thm:lawconv} to $\eta f$ and dividing by $\eta$, using
linearity of $P^{k}_\varepsilon$ and of $\mu_\kappa$, yields
$$
\big|P^{k}_\varepsilon f(x)-\mu_\kappa(f)\big|\ \le\ \delta\,\eta^{-1},\qquad\forall x\in\M.
$$
The claim then follows from the two chains of inequalities
$$
P^{k}_\varepsilon\big(x,A^\eta\big)\ \ge\ P^{k}_\varepsilon f(x)\ \ge\ \mu_\kappa(f)-\delta\,\eta^{-1}
\ \ge\ \mu_\kappa(A)-\delta\,\eta^{-1},
$$
$$
P^{k}_\varepsilon\big(x,A\big)\ \le\ P^{k}_\varepsilon f(x)\ \le\ \mu_\kappa(f)+\delta\,\eta^{-1}
\ \le\ \mu_\kappa(A^\eta)+\delta\,\eta^{-1}.
$$
To prove the first claim, note that $\eta_U>0$, since $S_\star$ is compact, $\M\setminus U$ is closed and nonempty,
and the two sets are disjoint. We now apply \eqref{eq:claim_lower} with $A=S_\star^{\eta_U}$ and $\eta:=\eta_U$. By the triangle inequality, we have that
$(S_\star^{\eta_U})^{\eta_U}\subseteq S_\star^{2\eta_U}$. Also, since $2\eta_U\le\dist(S_\star,\M\setminus U)$, we have that $S_\star^{2\eta_U}\subseteq U$. It follows that
$$
P^{k}_\varepsilon\big(x,U\big)\ \ge\ P^{k}_\varepsilon\big(x,(S_\star^{\eta_U})^{\eta_U}\big)
\ \ge\ \mu_\kappa\big(S_\star^{\eta_U}\big)-\delta\,\eta_U^{-1},
$$
and, since the set $\M\setminus S_\star^{\eta_U}$ is closed and disjoint from $S_\star$, we obtain from \eqref{eq:conc} that 
$$
\mu_\kappa(S_\star^{\eta_U})\ge1-C_U\,e^{-2\kappa c_U},
$$
which proves the first claim.

To prove the second claim, note that $\eta_K>0$, since $K$ is compact, $S_\star$ is closed, and the two sets are disjoint. We apply \eqref{eq:claim_upper} with $A:=K$ and $\eta:=\eta_K$, so that
$$
P^{k}_\varepsilon\big(x,K\big)\ \le\ \mu_\kappa\big(K^{\eta_K}\big)+\delta\,\eta_K^{-1}
\ \le\ \mu_\kappa\big(\overline{K^{\eta_K}}\big)+\delta\,\eta_K^{-1}.
$$
By definition, every $y\in\overline{K^{\eta_K}}$ satisfies $\dist(y,K)\le\eta_K$, while, by construction $\dist(K,S_\star)\ge2\eta_K$. Hence, the set $\overline{K^{\eta_K}}$ is closed and disjoint from $S_\star$. We then obtain from \eqref{eq:conc} that
$$
\mu_\kappa\big(\overline{K^{\eta_K}}\big)\le C_K\,e^{-2\kappa c_K},
$$
which proves the second claim and concludes the proof. \hfill $\blacksquare$
\section{NUMERICAL EXAMPLE}
\label{sec:example}
%
%\begin{example}\label{exmp:sphere}\normalfont
\begin{figure*}[t]
    \centering
        \hfill
        \includegraphics[width=\textwidth]{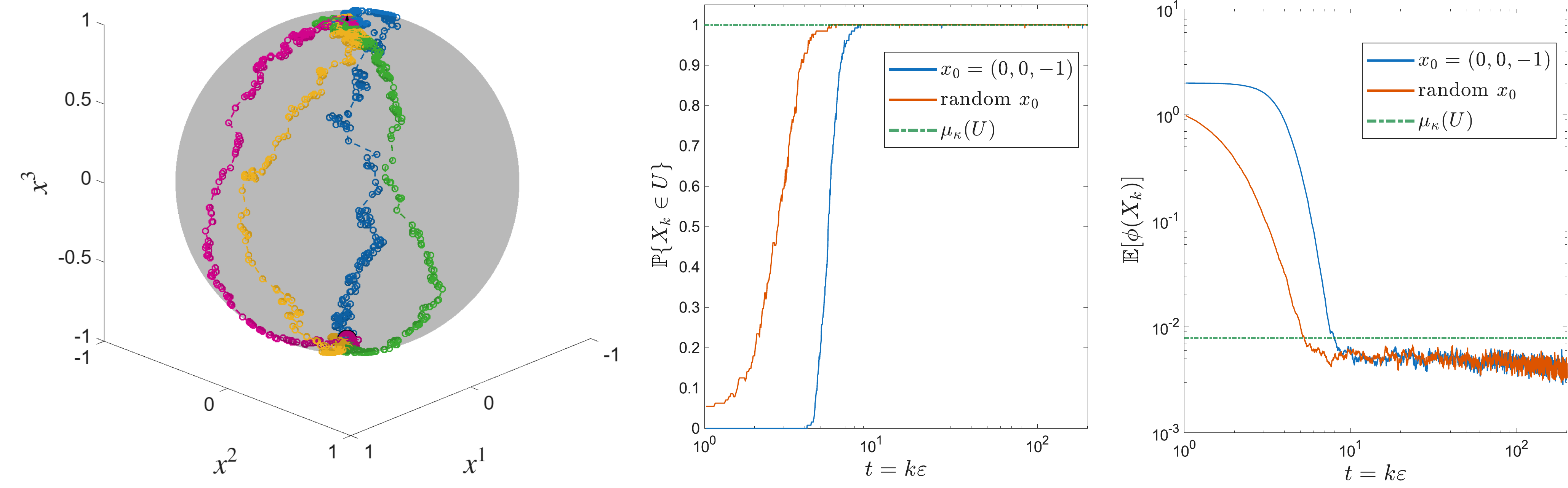}
        \caption{(Left) the sample paths generated by the random ODE \eqref{eq:rode}, (center) empirical approximations of $\mathbb{P}\{X_k\in U\}$, and (right)  empirical approximations of $\mathbb{E}[\phi(X_k)]$.}
        \label{fig:phi_sphere}
        \vspace{-0.2cm}
\end{figure*}
Let $\mathcal{M}=\mathbb{S}^2$ and $\mathcal{F} = \{f_1,f_2,f_3\}$ where the vector fields $f_i$ are given by $f_1(x)=x^2\partial_{1}-x^1\partial_{2}$, $f_2(x)= x^3\partial_2-x^2\partial_3$, and $f_3(x)= x^1\partial_3-x^3\partial_1$ in the ambient coordinates on $\mathbb{R}^3$. Fix any $x_\star\in\mathbb{S}^2$ and let $\phi$ be given by
    \begin{align*}
        \phi(x):=1-(x_\star^1x^1 + x_\star^2x^2+x_\star^3x^3).
    \end{align*}
    The function $\phi$ is a Morse function. Moreover, $\phi$ has two critical points, namely, $\pm x_\star$, and only the critical point $x_\star$ is a local minimum. The Riemannian metric $\langle \cdot,\cdot\rangle$ corresponding to the family $\mathcal{F}$ is defined in the ambient coordinates by
    \begin{align}
        \langle \cdot,\cdot\rangle &= \sum_{i,j=1}^3g_{ij}\mathrm{d}x^i\otimes \mathrm{d}x^j, &
        G(x)&= [g_{ij}]= I - x x^\top.
    \end{align}
    In particular, the metric $\langle \cdot,\cdot\rangle$ coincides with the induced metric inherited from the ambient Euclidean metric on $\mathbb{R}^3$. Consequently, if $D_XY$ denotes the directional derivative of the vector field $Y$ along the vector field $X$ in the ambient space $\mathbb{R}^3$, then the Levi-Civita connection on $\mathcal{M}$ in the ambient coordinates is given by $\nabla_X Y = G(x)^\dagger D_XY$, where $G(x)^\dagger$ is the Moore-Penrose inverse of $G(x)$. Utilizing this expression, we compute that $\bar{f}_0 = 0$. We now generate several sample paths of the random ODE \eqref{eq:rode} from the initial condition $-x_\star$. For simplicity, we fix $x_\star = (0,0,1)$. A plot of some sample paths of the chain $X_k$ on $\mathcal{M}$ are shown in Figure \ref{fig:phi_sphere} for $\varepsilon = 0.02$, and $\kappa = 25$. As expected, the sample paths converge to the vicinity global minimizer of the function $\phi$. Moreover, the trajectories remain on $\mathcal{M}$ for all time. We also plot ensemble statistics by simulating $N=256$ independent sample paths and computing the empirical approximations for the probability $\mathbb{P}\{X_k\in U\}$, where $U:=\{x\in\mathcal{M}~|~\phi(x)<0.06\}$, as well as the expectation $\mathbb{E}[\phi(X_k)]$. As shown in Figure \ref{fig:phi_sphere}, the approximations converge to reasonable margins of the corresponding values for the associated Langevin diffusion, which corroborates our analysis.

\vspace{-0.2cm}
\section{CONCLUSIONS}
\label{sec:conclusions}
We introduced and analyzed a novel stochastic Lie-bracket approximation algorithm for global zeroth-order optimization on compact manifolds without boundary. Our analysis shows that the proposed algorithm leads to finite-horizon, as well as infinite-horizon, practical probabilistic guarantees on the convergence of the proposed scheme. Numerical results are provided to illustrate the behavior of the algorithm. Future work will consider extensions of the proposed algorithm to compact manifolds with boundaries. 

\vspace{-0.2cm}
\bibliography{biblio}
\bibliographystyle{ieeetr}

\end{document}